\newcommand{\doublespace} {\addtolength{\baselineskip}{.50\baselineskip}}
\newtheorem{theorem}{Theorem}[section]
\newtheorem{lemma}{Lemma}[section]
\newtheorem{corollary}{Corollary}[section]
\begin{document}

\allowdisplaybreaks  

\doublespace

\title []{Finite time blow-up in higher dimensional two species problem in the Cauchy problem}

\author []{Tae Gab Ha$^\dag$ and Seyun Kim$^\ddag$}

\address{Department of Mathematics, and Institute of Pure and Applied Mathematics, Jeonbuk National University, Jeonju 54896, Republic of Korea}

\email{$^\dag$tgha@jbnu.ac.kr, $^\ddag$dccddc8097@naver.com}

\subjclass[2020]{35B44, 35Q92, 92C17}

\keywords{chemotaxis system; blow-up}


\maketitle

\begin{abstract}
    In this paper, we study the blow-up radial solution of fully parabolic system with higher dimensional two species Cauchy problem for some initial condition.
    In addition, we show that the set of positive radial functions in  $L^{1}(\mathbb{R})\cap BUC(\mathbb{R}^{N}) \times L^{1}(\mathbb{R})\cap BUC(\mathbb{R}^{N}) \times W^{1,1}(\mathbb{R}^{N}) \cap W^{1,\infty}(\mathbb{R}^{N})$ has a dense subset composed of positive radial initial data causing blow-up in finite time with respect to topology  $L^{p}(\mathbb{R}^{N}) \times L^{p}(\mathbb{R}^{N}) \times H^{1}(\mathbb{R}^{N})\cap W^{1,1}(\mathbb{R}^{N})$ for $p \in \left[1,\frac{2N}{N+2}\right)$.
\end{abstract}
\section{Introduction}
Let us consider the following the two species fully parabolic system such that
\begin{equation}\label{0}
\begin{aligned}
 u_{t}&=\Delta u -\chi \nabla \cdot (u\nabla w),  \quad & x \in \Omega, t>0,\\
 v_{t}&=\Delta v - \xi \nabla \cdot (v \nabla w), \quad & x \in \Omega, t>0, \\
 w_{t}& =\Delta w - \lambda w + \alpha u +\beta v, \quad & x \in \Omega, t>0 ,
\end{aligned}
\end{equation}
where $\chi,\xi,\lambda,\alpha,\beta$ are positive constants. In this system, $(u,v)$ denote population density of the two species and $w$ denote concentration of the two species. The system (\ref{0}) is an extension of the classical Keller-Segel system because if $\chi=\xi$ and $z:=  \alpha u +\beta v$, then we have
\begin{equation}\label{00}
\begin{aligned}
 z_{t}&=\Delta z -\chi \nabla \cdot (z\nabla w),  \quad & x \in \Omega, t>0,\\
 w_{t}& =\Delta w - \lambda w + z, \quad & x \in \Omega, t>0.
\end{aligned}
\end{equation}
When $\Omega\subset \mathbb{R}^2$ is a smooth bounded domain in (\ref{00}), Nagai et al. \cite{nagai3} proved that there exists a unique radially classical solution if $\int_{\Omega}z_{0}dx < \frac{8\pi}{\chi}$ and there exists a unique classical solution if $\int_{\Omega}z_{0}dx < \frac{4\pi}{\chi}$. Horstmann proved that there exists a finite-time blow-up radial solution if $\int_{\Omega}z_{0}dx > \frac{8\pi}{\chi}$ in \cite{Hots2}, and a finite-time blow-up non-radial solution in \cite{Hots3}. In the higher dimension ($N\geqslant 3$), Winkler \cite{winkler1} showed there exists a unique classical blow-up solution for large quantity of $\int_{\Omega}z_{0}w_{0}dx$. It was also constructed the initial data set that cause a blow-up phenomenon and this set is dense in $C^{0}(\overline{\Omega}) \times W^{1,\infty}(\Omega)$ with respect to the topology $L^{p}(\Omega) \times H^{1}(\Omega)$ for $p \in \left(1,\frac{2N}{N+2}\right)$. Using the methods of \cite{winkler1}, Li et al. \cite{li1} extended the blow-up result to (\ref{0}). They constructed the new energy function in order to overcome the $\chi \neq \xi$. On the other hand, when $\Omega =\mathbb{R}^{N}$,  Winkler \cite{winkler2} showed that (\ref{00}) with $\chi=\lambda=1$ for $N\geqslant3$ blows up in finite time with $T_{max}\leqslant1$ at the origin for some assumption of the initial data. Nagai et al. \cite{nagai1} showed that (\ref{00}) has a global solution if the initial data is small enough and the solution behaves like heat kernel as $t \rightarrow \infty$ for $N\geqslant 2$. Li et al. \cite{li2} proved the similar results as \cite{nagai1} for (\ref{0}). We refer \cite{nagai5} for $N=1$. For various two species problem with parabolic-elliptic system, we refer \cite{mul1,mul2,mul3}.\\
Motivated by \cite{winkler2,winkler1,li1}, we consider the following Cauchy problem:
\begin{equation}\label{1}
\begin{aligned}
 u_{t}&=\Delta u -\chi \nabla \cdot (u\nabla w),  \quad & x \in \mathbb{R}^{N}, t>0,\\
 v_{t}&=\Delta v - \xi \nabla \cdot (v \nabla w), \quad & x \in \mathbb{R}^{N}, t>0, \\
 w_{t}& =\Delta w - \lambda w + \alpha u +\beta v, \quad & x \in \mathbb{R}^{N}, t>0 ,
\end{aligned}
\end{equation}
where $\chi,\xi,\lambda,\alpha,\beta$ are positive constants. The contents of our paper are written by using method in \cite{winkler1, winkler2}, however these cannot be applied as they are. Indeed, \cite{winkler1} constructed the energy function of system (\ref{00}) with
\begin{align*}
    \mathcal{F}(u,v)(t)=\frac{1}{2} \int_{\Omega}\vert \nabla w \vert^{2} dx +\frac{1}{2} \int_{\Omega} w^{2}dx -\int_{\Omega}zw dx +\int_{\Omega}z\log z dx.
\end{align*}
In a bounded domain, the entropy term is lower bounded such as $\int_{\Omega}z\log z dx \geqslant -\frac{\Omega}{e} $ by using $ x\log x \geqslant -\frac{1}{e}$ for all $x>0$ (see \cite{winkler1} Theorem 5.1). But, it is failed in an unbounded domain. So \cite{winkler2} considered modified entropy term by $\int_{\Omega} z \log(z+1) dx$. But on (\ref{1}) with $\chi,\xi \neq 1$, it is difficult to estimate the energy function by using  $\int_{\Omega} z \log(z+1) dx$ entropy. In order to overcome this difficulty, we construct the following energy function
\begin{equation}\label{2}
\begin{aligned}
\mathcal{F}(u,v,w)(t)&=\frac{1}{2}\int_{\mathbb{R}^{N}} \vert \nabla w(\cdot,t) \vert ^{2} dx +\frac{\lambda}{2}\int_{\mathbb{R}^{N}} w(\cdot,t)^{2}dx  -\int_{\mathbb{R}^{N}}( \alpha u(\cdot,t) +\beta v(\cdot,t) )w(\cdot,t)  dx \\
&\quad + \frac{\alpha}{\chi} \int_{\mathbb{R}^{N}} (u(\cdot,t)+1)\log(u(\cdot,t)+1) dx +\frac{\beta}{\xi}\int_{\mathbb{R}^{N}} (v(\cdot,t)+1)\log(v(\cdot,t)+1) dx,
\end{aligned}
\end{equation}
as in \cite{cal,nagai4} and \cite{li1}. Also unlike \cite{winkler2}, we will not use a cut-off function for estimating the energy functions because of additional regularity $w(\cdot,t) \in C^{1}((0,T_{max});H^{1}(\mathbb{R}^{N}))$ by using \cite{shi1,shi2,nagai2}.

Unless there is any confusion, we write as follows $\Vert \cdot \Vert_{p}:=\Vert \cdot \Vert_{L^{p}(\mathbb{R}^{N})}$ for $ 1\leqslant p \leqslant \infty$, and $BUC(\mathbb{R}^{N})$ denotes a bounded uniformly continuous Banach space on $\mathbb{R}^{N}$  with $\Vert \cdot \Vert_{\infty}$ norm. We now introduce for our main results.
\begin{theorem}\label{63}
Let $u_{0}, v_{0} \in L^{1}(\mathbb{R}^{N}) \cap BUC(\mathbb{R}^{N}), w_{0} \in W^{1,1}(\mathbb{R}^{N}) \cap W^{1,\infty}(\mathbb{R}^{N})$ be radially symmetric and nonnegative nonzero on $\mathbb{R}^{N}$ with $N\geqslant 3$. Then the classical solution of (\ref{1}) blows up at the finite time $T_{max}\leqslant1$ if $F(u_{0},v_{0},w_{0})<-CK^{\frac{2}{1-\theta}}$ , where $C$ is a for some positive constant depending on $\chi,\xi,\alpha,\beta,\lambda,N,\theta$ and $K:= \Vert u_{0} \Vert_{1}+\Vert v_{0} \Vert_{1}+\Vert w_{0} \Vert_{1}+\Vert \nabla w_{0} \Vert_{1}$ for $\theta \in \left(\frac{1}{2},1\right)$.
\end{theorem}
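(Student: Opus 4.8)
The plan is to adapt the energy method of Winkler \cite{winkler1,winkler2} and Li et al. \cite{li1}: I will show that $\mathcal F$ in \eqref{2} is a Lyapunov functional for \eqref{1} whose dissipation controls $-\mathcal F$ super-linearly, and then convert this into an ordinary differential inequality that forces finite-time blow-up. Two facts will be used repeatedly: the masses $\|u(\cdot,t)\|_1=\|u_0\|_1$ and $\|v(\cdot,t)\|_1=\|v_0\|_1$ are conserved, and the regularity $w(\cdot,t)\in C^1((0,T_{max});H^1(\mathbb R^N))$ obtained from \cite{shi1,shi2,nagai2} lets me integrate by parts on $\mathbb R^N$ with no boundary contribution, so that --- unlike \cite{winkler2} --- no spatial cut-off is needed.

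\textbf{Step 1: energy--dissipation identity.} Differentiating \eqref{2} along \eqref{1}, the three $w$-terms recombine through the equation $w_t=\Delta w-\lambda w+\alpha u+\beta v$ into $-\|w_t\|_2^2$, while the chemotactic terms, after integration by parts, give the weighted gradient dissipation of each species. The coefficients $\frac{\alpha}{\chi}$ and $\frac{\beta}{\xi}$ in the entropy are chosen exactly so that the cross-terms produced by $-\int(\alpha u+\beta v)w$ are matched despite $\chi\neq\xi$; the regularization $(u+1)\log(u+1)$, $(v+1)\log(v+1)$ --- forced on $\mathbb R^N$ since $\int u\log u$ need not be bounded below --- leaves a remainder controlled by $\int|\nabla w|^2$ that will be absorbed in Step 2. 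I expect the outcome to be $\frac{d}{dt}\mathcal F(t)=-\mathcal D(t)$ with $\mathcal D(t)\ge 0$, so that $-\mathcal F$ is nondecreasing.

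\textbf{Step 2: the key lower bound (main obstacle).} This is the heart of the argument and where the unbounded domain makes things delicate. I will prove an inequality of the form
\[
-\mathcal F(t)\le C\big(K^{\,r}+K^{\,s}\,\mathcal D(t)^{\theta}\big),\qquad \theta\in\Big(\tfrac12,1\Big),
\]
for suitable exponents $r,s$ depending only on $N$ and $\theta$. The dangerous term is the coupling $\int(\alpha u+\beta v)w$, which I bound by H\"older and the Sobolev embedding $H^1(\mathbb R^N)\hookrightarrow L^{2N/(N-2)}(\mathbb R^N)$ (valid for $N\ge 3$), giving $\int uw\le C\|u\|_{2N/(N+2)}\|\nabla w\|_2$; the factor $\|u\|_{2N/(N+2)}$ is then interpolated by Gagliardo--Nirenberg between the conserved mass $\|u\|_1$ and a quantity controlled by the species dissipation in $\mathcal D$, and similarly for $v$. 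The entropy contributions and the remainder from Step 1 are estimated in the same spirit. The restriction $\theta>\tfrac12$ is exactly what the interpolation exponents in dimension $N\ge3$ permit, and assembling these estimates --- keeping careful track of the $K$-dependence through mass conservation --- is the technically hardest part.

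\textbf{Step 3: ODE blow-up and the time bound.} Writing $y(t):=-\mathcal F(t)$, Step 1 gives $y'(t)=\mathcal D(t)\ge 0$ and Step 2 gives $\mathcal D(t)\ge c(K)\,\big(y(t)-CK^{\,r}\big)_{+}^{1/\theta}$. Under the hypothesis $\mathcal F(u_0,v_0,w_0)<-CK^{2/(1-\theta)}$ the initial value $y(0)$ exceeds the threshold $CK^{\,r}$, and since $y$ is nondecreasing it stays above it; hence $y'\ge c(K)\,y^{1/\theta}$ with exponent $1/\theta>1$. Integrating this ODE shows $y(t)\to\infty$ at some $T^{\ast}<\infty$, and the precise threshold $K^{2/(1-\theta)}$ is what makes the resulting blow-up time satisfy $T^{\ast}\le 1$. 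Since $\mathcal F$ remains finite as long as the solution is classical and lies in the energy space, the divergence of $-\mathcal F$ forces $T_{max}\le T^{\ast}\le 1$, which is the assertion.
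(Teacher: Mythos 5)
Your overall strategy (energy functional, a functional inequality bounding $-\mathcal F$ by a sublinear power of the dissipation, then an ODE/Gronwall blow-up argument) is the same as the paper's, but two of your steps contain genuine gaps. First, in Step 1 the claim that $\mathcal F'=-\mathcal D$ with $\mathcal D\geqslant 0$, so that $-\mathcal F$ is nondecreasing, is not what the computation gives: because the entropy is regularized to $(u+1)\log(u+1)$, the cancellation is imperfect and one is left with $\mathcal F'\leqslant -D+\frac{\alpha\chi+\beta\xi}{2}\int_{\mathbb R^N}|\nabla w|^2$. You cannot both keep monotonicity and ``absorb the remainder in Step 2.'' The paper instead re-expresses $\int|\nabla w|^2$ through the definition of $\mathcal F$ itself, obtaining an \emph{integral} inequality containing the extra terms $(\alpha\chi+\beta\xi)\int_0^t\mathcal F(s)\,ds$ and $(\alpha\chi+\beta\xi)\int_0^t\int(\alpha u+\beta v)w$, and then runs a contradiction argument on $t_0:=\sup\{\tilde t:\ \mathcal F(t)<-2C_1K^2 \text{ on }(0,\tilde t)\}$ before invoking the Gronwall-type lemma. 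Your clean ODE $y'=\mathcal D$ in Step 3 therefore does not follow from what Step 1 actually yields.

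Second, and more seriously, your Step 2 would not go through as sketched. The dissipation $D$ consists of $\|w_t\|_2^2$ and the \emph{defect} terms $\bigl\|\tfrac{\nabla u}{\sqrt{\chi(u+1)}}-\sqrt{\chi(u+1)}\,\nabla w\bigr\|_2^2$ (and its analogue for $v$); it does not control $\|\nabla u^{1/2}\|_2$ or any pure gradient quantity of $u$, so the Gagliardo--Nirenberg interpolation of $\|u\|_{2N/(N+2)}$ between $\|u\|_1$ and ``a quantity controlled by the species dissipation'' has no available right-hand endpoint. This is precisely why the paper (following Winkler) takes a completely different route that uses radial symmetry in an essential way: pointwise bounds $w(r,t)\leqslant K+CKr^{1-N}$ from the conserved $L^1$ norms, a splitting of $\int(\alpha u+\beta v)w$ over $B_{r_0}$, $B_2\setminus B_{r_0}$ and the exterior, a radial Gronwall estimate for $r^{2N-2}w_r^2$ against $\|w_t\|_2$ and the defect terms, and finally the optimized choice $r_0=\min\{1,\|w_t\|_2^{-\alpha}\}$ with $\alpha\in(0,\tfrac{2}{N-1})$, which is where the restriction $\theta\in(\tfrac12,1)$ actually comes from. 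Your proposal never invokes radial symmetry, yet it is a standing hypothesis of the theorem and the load-bearing ingredient of the key inequality; without it (or a substitute argument) Step 2 is a missing idea rather than a technicality.
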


\begin{corollary}
Let $u_{0}, v_{0} \in L^{1}(\mathbb{R}^{N}) \cap BUC(\mathbb{R}^{N}), w_{0} \in W^{1,1}(\mathbb{R}^{N}) \cap W^{1,\infty}(\mathbb{R}^{N})$ be radially symmetric and positive on $\mathbb{R}^{N}$ with $N\geqslant 3$. Then there exists $(u_{0j})_{j \in \mathbb{N}},(v_{0j})_{j \in \mathbb{N}} \subseteq L^{1}(\mathbb{R}^{N}) \cap BUC(\mathbb{R}^{N})$ and $(w_{0j})_{j \in \mathbb{N}} \subseteq W^{1,1}(\mathbb{R}^{N}) \cap W^{1,\infty}(\mathbb{R}^{N})$ such that $(u_{0j},v_{0j},w_{0j})$ are radially symmetric and positive for all $ j \in \mathbb{N}$ with
\begin{align*}
(u_{0j},v_{0j}) &\rightarrow (u_{0},v_{0}) \ \text{in} \ L^{p}(\mathbb{R}^{N}) , \ \text{as} \ j \rightarrow \infty \ \text{for} \ p\in \left[1,\frac{2N}{N+2}\right),\\
w_{0j}& \rightarrow w_{0} \ \text{in} \ H^{1}(\mathbb{R}^{N})\cap  W^{1,1}(\mathbb{R}^{N}) ,  \ \text{as} \ j \rightarrow \infty ,
\end{align*}
and the associated classical solution ($u_{j}, v_j, w_j$) blows up at the finite time $T_{max,j}\leqslant 1$ in the sense that
\begin{equation*}
\lim sup_{t\rightarrow T_{max,j}^{+}} \Vert u_{j}(\cdot,t) \Vert_{\infty} + \Vert v_{j}(\cdot,t) \Vert_{\infty} = \infty.
\end{equation*}
\end{corollary}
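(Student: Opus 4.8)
The plan is to reduce the corollary to Theorem~\ref{63} by exhibiting, for each given radial positive datum $(u_0,v_0,w_0)$, an explicit family of radial positive perturbations that converge to it in the prescribed topologies while driving the energy $\mathcal{F}$ to $-\infty$ and keeping $K$ bounded. I would fix a radial profile $\varphi\in C_c^\infty(\mathbb{R}^N)$ with $0\le\varphi\le1$, $\varphi\equiv1$ on $B_{1/2}(0)$ and $\mathrm{supp}\,\varphi\subseteq B_1(0)$, set $\rho_j:=1/j$ and $\varphi_j(x):=\varphi(x/\rho_j)$, and define the spiked data
\begin{align*}
u_{0j}:=u_0+A_j\varphi_j,\qquad v_{0j}:=v_0+A_j\varphi_j,\qquad w_{0j}:=w_0+B_j\varphi_j,
\end{align*}
with amplitudes $A_j:=\rho_j^{-a}$, $B_j:=\rho_j^{-b}$ for exponents $a,b>0$ to be fixed. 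These are radial, smooth, compactly supported and nonnegative, so $u_{0j},v_{0j}\in L^1(\mathbb{R}^N)\cap BUC(\mathbb{R}^N)$ and $w_{0j}\in W^{1,1}(\mathbb{R}^N)\cap W^{1,\infty}(\mathbb{R}^N)$; since $u_0,v_0,w_0$ are positive, so are the perturbed data, and hence each $(u_{0j},v_{0j},w_{0j})$ is admissible for Theorem~\ref{63}.

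The heart of the matter is the choice of $a,b$. Because $p<\tfrac{2N}{N+2}$ is equivalent to $\tfrac Np>\tfrac{N+2}{2}$, and since $\tfrac{N+2}{2}+\tfrac{N-2}{2}=N$, one may fix
\begin{align*}
\tfrac{N+2}{2}<a<\tfrac Np,\qquad 0<b<\tfrac{N-2}{2},\qquad a+b>N;
\end{align*}
the simultaneous attainability of the first and third constraints is precisely what the subcritical range $p<\tfrac{2N}{N+2}$ buys. Using the scalings $\|\varphi_j\|_q^q=\rho_j^N\|\varphi\|_q^q$ and $\|\nabla\varphi_j\|_q^q=\rho_j^{N-q}\|\nabla\varphi\|_q^q$, the spikes are negligible in the weak norms: $\|A_j\varphi_j\|_p=\rho_j^{N/p-a}\|\varphi\|_p\to0$ (as $a<\tfrac Np$), while $\|B_j\varphi_j\|_{H^1}\lesssim B_j(\rho_j^{N/2}+\rho_j^{N/2-1})\to0$ and $\|B_j\varphi_j\|_{W^{1,1}}\lesssim B_j(\rho_j^{N}+\rho_j^{N-1})\to0$ (as $b<\tfrac{N-2}{2}$). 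This gives $(u_{0j},v_{0j})\to(u_0,v_0)$ in $L^p(\mathbb{R}^N)$ and $w_{0j}\to w_0$ in $H^1(\mathbb{R}^N)\cap W^{1,1}(\mathbb{R}^N)$. Moreover $\|A_j\varphi_j\|_1=\rho_j^{N-a}\to0$ (here $N-a>0$ since $a<\tfrac Np\le N$), so $K_j\to K_0:=\|u_0\|_1+\|v_0\|_1+\|w_0\|_1+\|\nabla w_0\|_1$ stays bounded.

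It then remains to show $\mathcal{F}(u_{0j},v_{0j},w_{0j})\to-\infty$. Expanding $\mathcal{F}$ and separating the pure–spike contributions from the mixed spike/background terms, every positive term is controlled: the Dirichlet term contributes $\tfrac12 B_j^2\|\nabla\varphi_j\|_2^2\asymp\rho_j^{\,N-2-2b}\to0$ (as $b<\tfrac{N-2}{2}$), the spike parts of the $w^2$– and entropy–terms are $O(\rho_j^{\,N-2b})$ and $O(\rho_j^{\,N-a}|\log\rho_j|)\to0$, and all mixed terms are bounded by a vanishing spike norm times a fixed background norm (the background entropy $\int(u_0+1)\log(u_0+1)\,dx$ being finite since $u_0\in L^1\cap L^\infty$). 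By contrast the cross term produces
\begin{align*}
-\int_{\mathbb{R}^N}(\alpha u_{0j}+\beta v_{0j})w_{0j}\,dx
=-(\alpha+\beta)\|\varphi\|_2^2\,A_jB_j\rho_j^N+O(1)
\asymp-\rho_j^{\,N-a-b}\longrightarrow-\infty,
\end{align*}
because $a+b>N$. Hence $\mathcal{F}(u_{0j},v_{0j},w_{0j})\to-\infty$ while $-C K_j^{2/(1-\theta)}\to-C K_0^{2/(1-\theta)}$ is bounded, so $\mathcal{F}(u_{0j},v_{0j},w_{0j})<-C K_j^{2/(1-\theta)}$ for all large $j$, and Theorem~\ref{63} yields finite-time blow-up $T_{max,j}\le1$ for those $j$ (discarding finitely many indices). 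The main obstacle—and the reason for the window $p\in[1,\tfrac{2N}{N+2})$—is exactly this competition: one must concentrate mass strongly enough that the negative self-interaction $-A_jB_j\rho_j^N$ beats the Dirichlet energy $\sim B_j^2\rho_j^{N-2}$ and the entropy $\sim A_j\rho_j^N\log A_j$, yet keep the spikes small in $L^p$, $H^1$ and $W^{1,1}$; the critical Sobolev exponent $\tfrac{2N}{N+2}$ is precisely the threshold at which the constraints $a<\tfrac Np$ and $b<\tfrac{N-2}{2}$ still leave room for $a+b>N$.
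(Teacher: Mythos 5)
Your argument is correct, and it reaches the conclusion by a genuinely different construction than the paper's. The paper does not add a perturbation: it \emph{replaces} the data on a shrinking ball $B_{r_j}$ by regularized power-law profiles $a_j(r^2+\eta_j)^{-(N-\kappa)/2}$ and $c_j(r^2+\eta_j)^{-\kappa/2}$ matched continuously to $u_0,v_0,w_0$ at $r=r_j$, with a single exponent $\kappa\in\bigl(N-\tfrac{N}{p},\tfrac{N-2}{2}\bigr)$ and a second small parameter $\eta_j$ chosen so that $r_j^N\varphi(\eta_j/r_j^2)\geqslant j$, where $\varphi(\epsilon)=\int_0^1\rho^{N-1}(\rho^2+\epsilon)^{-N/2}\,d\rho$ diverges as $\epsilon\to0$; the divergence of the cross term is then read off from $\varphi$, while the $L^p$ and $H^1\cap W^{1,1}$ smallness comes from $\kappa>N-\tfrac Np$ and $\kappa<\tfrac{N-2}{2}$ respectively. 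Your additive bump ansatz $u_0+A_j\varphi_j$, $w_0+B_j\varphi_j$ with $A_j=\rho_j^{-a}$, $B_j=\rho_j^{-b}$ replaces the two-parameter matching by transparent power counting, and your window $\tfrac{N+2}{2}<a<\tfrac Np$, $b<\tfrac{N-2}{2}$, $a+b>N$ is exactly the same arithmetic that makes the paper's interval for $\kappa$ nonempty, so the exponent $\tfrac{2N}{N+2}$ enters for the identical reason; your version is arguably cleaner because positivity and membership in $BUC$, $W^{1,1}\cap W^{1,\infty}$ are immediate and the energy expands as spike, background and mixed terms each with an explicit power of $\rho_j$. Two small points to align with the paper: both constructions fix $p$ first (your $a$, like the paper's $\kappa$, depends on $p$, and no single choice covers all $p<\tfrac{2N}{N+2}$ simultaneously), so neither proof gives more than the "for each fixed $p$" reading of the statement; and you should take $K_j$ with the paper's normalization (the proof of Theorem 1.1 tacitly uses $K>1$), which changes nothing since all you need is boundedness of $K_j$ against $\mathcal{F}(u_{0j},v_{0j},w_{0j})\to-\infty$.
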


\section{Proof of theorem}
In this section, we will prove main theorem. First, we deal with local well-posedness and blow-up criterion in Lemma 2.1 - 2.3. Second, we estimate the energy function and control the terms $\int \vert \nabla w \vert ^{2} dx$ and $\int (\alpha u +\beta v)w dx$ in Lemma 2.4 - 2.9. Then we will prove that the solution blows up in finite time by using contradiction arguments. Unless otherwise stated, the constant C is a generic positive constant, different in various occurrences.

\begin{lemma}\label{3}
Let $u_{0}, v_{0} \in L^{1}(\mathbb{R}^{N}) \cap BUC(\mathbb{R}^{N}), w_{0} \in W^{1,1}(\mathbb{R}^{N}) \cap W^{1,\infty}(\mathbb{R}^{N})$ be radially symmetric and nonnegative nonzero on $\mathbb{R}^{N}$ with $N\geqslant 3$. Then there exist a unique positive classical radially symmetric solution $(u,v,w)$ of (\ref{1}) such that
\begin{align*}
    u & \in C^{0}([0,T_{max});L^{1}(\mathbb{R}^{N}) \cap BUC(\mathbb{R}^{N})) \cap C^{2,1}(\mathbb{R}^{N} \times (0,T_{max})) \cap C^{0}((0,T_{max});H^{2}(\mathbb{R}^{N})), \\
    v  & \in C^{0}([0,T_{max});L^{1}(\mathbb{R}^{N}) \cap BUC(\mathbb{R}^{N})) \cap C^{2,1}(\mathbb{R}^{N} \times (0,T_{max}))\cap C^{0}((0,T_{max});H^{2}(\mathbb{R}^{N})), \\
    w & \in C^{0}([0,T_{max}); W^{1,1}(\mathbb{R}^{N}) \cap W^{1,\infty}(\mathbb{R}^{N})) \cap  C^{2,1}(\mathbb{R}^{N} \times (0,T_{max}) \cap C^{1}((0,T_{max});H^{1}(\mathbb{R}^{N})),
\end{align*}
with
\begin{align*}
    u(\cdot,t)&=e^{t\Delta}u_{0}-\chi \int_{0}^{t} \nabla \cdot e^{(t-s)\Delta}[u(\cdot,s)\nabla w(\cdot,s)]ds \quad \text{for all} \quad t\in(0,T_{max}),\\
    v(\cdot,t)&=e^{t\Delta}v_{0}-\xi \int_{0}^{t} \nabla \cdot e^{(t-s)\Delta}[u(\cdot,s)\nabla w(\cdot,s)]ds \quad \text{for all} \quad t\in(0,T_{max}),\\
    w(\cdot,t)&=e^{t(\Delta-\lambda)}w_{0}+\int_{0}^{t}e^{(t-s)(\Delta-\lambda)}[\alpha u(\cdot,s) +\beta v(\cdot,s)]ds  \quad \text{for all} \quad t\in(0,T_{max}),
\end{align*}
and if $T_{max}<\infty$, then
$$
\limsup_{t\rightarrow T_{max}} (\Vert u(\cdot,t) \Vert_{\infty}+\Vert u(\cdot,t) \Vert_{1}+\Vert v(\cdot,t) \Vert_{\infty}+\Vert v(\cdot,t) \Vert_{1}+\Vert w(\cdot,t)\Vert_{W^{1,1}(\mathbb{R}^{N})} +\Vert w(\cdot,t) \Vert_{W^{1,\infty}(\mathbb{R}^{N})}) =\infty.
$$
 Moreover,
\begin{equation}\label{4}
    \int_{\mathbb{R}^{N}} u(\cdot,t) dx =\int_{\mathbb{R}^{N}} u_{0} dx ~\text{and}~  \int_{\mathbb{R}^{N}} v(\cdot,t) dx =\int_{\mathbb{R}^{N}} v_{0} dx \quad \text{for all} \quad t \in (0,T_{max}).
\end{equation}

\end{lemma}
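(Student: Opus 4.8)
The plan is to prove the lemma by a contraction-mapping argument in the mild (Duhamel) formulation, then to bootstrap regularity, and finally to establish the qualitative properties---positivity, radial symmetry, the extensibility criterion, and mass conservation---one at a time. First I would fix a small $T>0$ and define the solution map $\Phi=(\Phi_1,\Phi_2,\Phi_3)$ on a closed ball of the Banach space $X_T:=C^{0}([0,T];L^{1}\cap BUC)^{2}\times C^{0}([0,T];W^{1,1}\cap W^{1,\infty})$ via the three integral equations listed in the statement. The engine of the argument is the smoothing property of the heat semigroup: for $1\le p\le q\le\infty$ one has $\Vert e^{t\Delta}f\Vert_{q}\le C t^{-\frac{N}{2}(\frac{1}{p}-\frac{1}{q})}\Vert f\Vert_{p}$ and $\Vert\nabla e^{t\Delta}f\Vert_{q}\le C t^{-\frac{N}{2}(\frac{1}{p}-\frac{1}{q})-\frac{1}{2}}\Vert f\Vert_{p}$, together with the analogous bounds for $e^{t(\Delta-\lambda)}$ carrying an extra decay factor $e^{-\lambda t}$.

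Using these, I would estimate the chemotactic term $\int_{0}^{t}\nabla\cdot e^{(t-s)\Delta}[u\nabla w]\,ds$: moving the derivative onto the semigroup produces a factor $(t-s)^{-1/2}$, which is integrable in $s$ near $t$, so on a sufficiently short interval the integral operators are contractive. The product $u\nabla w$ is controlled in the required norms by H\"older's inequality, since $\Vert u\nabla w\Vert_{\infty}\le\Vert u\Vert_{\infty}\Vert\nabla w\Vert_{\infty}$ and $\Vert u\nabla w\Vert_{1}\le\Vert u\Vert_{\infty}\Vert\nabla w\Vert_{1}$, with $u\in L^{1}\cap L^{\infty}$ and $\nabla w\in L^{1}\cap L^{\infty}$. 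The Banach fixed point theorem then yields a unique mild solution on $[0,T]$, and a standard continuation argument extends it to a maximal interval $[0,T_{max})$. For the blow-up criterion, if $T_{max}<\infty$ while all the listed norms stayed bounded along some $t\to T_{max}$, one could restart the fixed point from a time near $T_{max}$ with uniform existence time and extend past $T_{max}$, a contradiction.

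Next I would upgrade the mild solution to a classical one: on $(0,T_{max})$ the semigroup smoothing gives instantaneous regularity gain, and feeding the bounds on $u,v,\nabla w$ back through the Duhamel formulas, together with parabolic $L^{p}$ and Schauder estimates, yields $u,v\in C^{2,1}(\mathbb{R}^{N}\times(0,T_{max}))$ and the $H^{2}$-regularity, while the extra time-regularity $w\in C^{1}((0,T_{max});H^{1}(\mathbb{R}^{N}))$ follows from the cited results \cite{shi1,shi2,nagai2} for the inhomogeneous heat equation. Positivity of $u,v,w$ follows from the maximum principle applied to each equation (viewed as linear with coefficients determined by the solution); since $u_{0},v_{0},w_{0}\ge 0$ are nonzero, the strong maximum principle forces strict positivity for $t>0$. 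Radial symmetry is inherited from the rotational invariance of the system combined with uniqueness of solutions.

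Finally, the mass identity (\ref{4}) is obtained by integrating the first two equations of (\ref{1}) over $\mathbb{R}^{N}$: both the diffusion term and the divergence-form chemotactic term integrate to zero because the $L^{1}$ framework guarantees the requisite decay at infinity, so $\frac{d}{dt}\int u=\frac{d}{dt}\int v=0$. I expect the main obstacle to be the contraction estimate for the chemotactic term at the low-integrability $L^{1}$ endpoint: one must control $\nabla\cdot e^{(t-s)\Delta}[u\nabla w]$ simultaneously in $L^{1}$ and $L^{\infty}$, balancing the singular factor $(t-s)^{-1/2}$ against the product bound for $u\nabla w$, and verifying that the $W^{1,1}\cap W^{1,\infty}$ norm of $w$---which carries one derivative of its Duhamel integral---closes consistently with the spaces chosen for $u$ and $v$.
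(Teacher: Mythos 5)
Your proposal follows essentially the same route as the paper: a Banach fixed point argument in the mild (Duhamel) formulation using heat-semigroup smoothing, upgrading to a classical solution via parabolic Schauder estimates, obtaining the extra $H^{2}$ and $C^{1}((0,T_{max});H^{1})$ regularity from the cited works, positivity from the strong maximum principle, and the mass identity by integrating the divergence-form equations. The paper's own proof is only a brief sketch deferring to \cite{winkler2,winkler3,shi1,shi2,rus}, so your more detailed outline is consistent with and fills in that sketch.
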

\begin{proof}
We obtain the existence of the mild solution by the Banach fixed point method and then this mild solution is also the classical solution by the parabolic Schauder estimate(\cite{rus} Theorem 5.1, 320p). For more detail, see \cite{winkler2,winkler3}.  Furthermore, we have $(u,v)\in C^{0}((0,T_{max});H^{2}(\mathbb{R}^{N}))$  and  $w(\cdot,t) \in C^{1}((0,T_{max});H^{1}(\mathbb{R}^{N}))$ and positivity of the solution  by virtue of method \cite{shi1}, \cite{shi2} and strong maximal principle. We can also obtain the (\ref{4}) by integral by part of the first and the second equation of (\ref{1}).

\end{proof}

\begin{lemma}\label{6}
    Let $(u_{0},v_{0},w_{0})$ satisfy the assumption in Lemma \ref{3}. Then we have
    \begin{align*}
        \Vert \nabla w \Vert_{1} &\leqslant \Vert \nabla w_{0} \Vert_{1} + C(\Vert u_{0} \Vert_{1}+\Vert v_{0} \Vert_{1}) \quad \text{for all} \quad t\in(0,T_{max}),\\
        \Vert w \Vert_{1}& \leqslant \Vert  w_{0} \Vert_{1} + C(\Vert u_{0} \Vert_{1}+\Vert v_{0} \Vert_{1}) \quad \text{for all} \quad t\in(0,T_{max}),
    \end{align*}
    where C is a for some positive constant depending on $\lambda,\alpha,\beta$.
\end{lemma}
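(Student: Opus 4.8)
The plan is to work directly from the mild (Duhamel) representation of $w$ recorded in Lemma \ref{3}, namely
\begin{align*}
w(\cdot,t) = e^{t(\Delta-\lambda)}w_{0} + \int_{0}^{t} e^{(t-s)(\Delta-\lambda)}[\alpha u(\cdot,s) + \beta v(\cdot,s)]\,ds,
\end{align*}
and to estimate the two quantities in $L^{1}$ by combining the $L^{1}$-contractivity of the damped heat semigroup with the mass conservation identity (\ref{4}). Writing $e^{t(\Delta-\lambda)} = e^{-\lambda t}\,e^{t\Delta}$ and using $\Vert e^{t\Delta}f \Vert_{1} \leqslant \Vert f \Vert_{1}$, I would first obtain
\begin{align*}
\Vert w(\cdot,t) \Vert_{1} \leqslant e^{-\lambda t}\Vert w_{0} \Vert_{1} + \int_{0}^{t} e^{-\lambda(t-s)}\bigl(\alpha\Vert u(\cdot,s) \Vert_{1} + \beta\Vert v(\cdot,s) \Vert_{1}\bigr)\,ds.
\end{align*}
By (\ref{4}) the $L^{1}$-norms of $u$ and $v$ are frozen at their initial values, so the integrand simplifies and the remaining time integral is $\int_{0}^{t} e^{-\lambda(t-s)}\,ds = (1-e^{-\lambda t})/\lambda \leqslant 1/\lambda$. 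Together with $e^{-\lambda t}\leqslant 1$ this yields the second asserted bound with $C = \max\{\alpha,\beta\}/\lambda$.

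For the gradient bound I would differentiate the Duhamel formula, moving the gradient onto $w_{0}$ in the first term and keeping it on the semigroup in the integral term. The key analytic ingredient is the $L^{1}$--$L^{1}$ gradient estimate for the heat semigroup, $\Vert \nabla e^{t\Delta}f \Vert_{1} \leqslant C\,t^{-1/2}\Vert f \Vert_{1}$ (which follows at once from $\nabla e^{t\Delta}f = (\nabla G_{t})\ast f$, Young's inequality, and the scaling $\Vert \nabla G_{t} \Vert_{1} = C\,t^{-1/2}$ of the Gaussian kernel $G_{t}$). With the damping this gives $\Vert \nabla e^{t(\Delta-\lambda)}f \Vert_{1} \leqslant C\,e^{-\lambda t}\,t^{-1/2}\Vert f \Vert_{1}$. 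Applying this inside the integral and again invoking (\ref{4}) leads to
\begin{align*}
\Vert \nabla w(\cdot,t) \Vert_{1} \leqslant e^{-\lambda t}\Vert \nabla w_{0} \Vert_{1} + C\bigl(\alpha\Vert u_{0} \Vert_{1} + \beta\Vert v_{0} \Vert_{1}\bigr)\int_{0}^{t} e^{-\lambda(t-s)}(t-s)^{-1/2}\,ds.
\end{align*}

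The only point requiring care --- and the main (mild) obstacle --- is controlling this last time integral, which carries a singularity at $s=t$ inherited from the gradient estimate. Substituting $\tau = t-s$ turns it into $\int_{0}^{t} \tau^{-1/2} e^{-\lambda\tau}\,d\tau \leqslant \int_{0}^{\infty} \tau^{-1/2} e^{-\lambda\tau}\,d\tau = \sqrt{\pi/\lambda}$, a finite constant independent of $t$. Since the singularity is of order $\tfrac{1}{2}<1$ it is integrable, and the exponential factor guarantees uniform boundedness in $t \in (0,T_{max})$; collecting constants depending only on $\lambda,\alpha,\beta$ then gives the first asserted inequality. Both estimates are thus uniform on the whole interval of existence, as claimed.
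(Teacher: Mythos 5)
Your argument is correct and follows essentially the same route as the paper: both estimate the Duhamel representation of $w$ in $L^{1}$ using the $L^{1}$-contractivity and the $t^{-1/2}$ gradient decay of the damped heat semigroup, invoke the mass conservation identity (\ref{4}) to freeze $\Vert u\Vert_{1}$ and $\Vert v\Vert_{1}$, and bound the resulting singular time integral by the convergent Gamma-type integral $\int_{0}^{\infty}\tau^{-1/2}e^{-\lambda\tau}\,d\tau$. The only difference is cosmetic: you justify the kernel estimate $\Vert\nabla e^{t\Delta}f\Vert_{1}\leqslant Ct^{-1/2}\Vert f\Vert_{1}$ directly via Young's inequality, whereas the paper cites Lemma 2.1 of \cite{winkler2} for it.
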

\begin{proof}
    By using the Lemma 2.1 in \cite{winkler2} and Lemma \ref{3}, we have
    \begin{align*}
        \Vert \nabla w(\cdot,t) \Vert_{1}&= \Bigl\Vert e^{t(\Delta-\lambda)}\nabla w_{0}+\int_{0}^{t}\nabla e^{(t-s)(\Delta-\lambda)}[\alpha u(\cdot,s) +\beta v(\cdot,s)]ds \Bigr\Vert_{1} \\
          & \ \leqslant  e^{-\lambda t}\Vert \nabla w_{0} \Vert_{1}+ c_{1}\int_{0}^{t}(t-s)^{-\frac{1}{2}}e^{-\lambda(t-s)}\Vert \alpha u(\cdot,s) +\beta v(\cdot,s)\Vert_{1} ds \\
          & \ \leqslant e^{-\lambda t}\Vert \nabla w_{0} \Vert_{1} +c_{1}(\alpha \Vert u_{0} \Vert_{1}+\beta \Vert v_{0} \Vert_{1})\int_{0}^{\infty}(t-s)^{-\frac{1}{2}}e^{-\lambda(t-s)} ds .
    \end{align*}
    By the definition of the Gamma function, $\int_{0}^{\infty}(t-s)^{-\frac{1}{2}}e^{-\lambda(t-s)} ds < \infty$. Similarly, we obtain the $\Vert w \Vert_{1}$ estimate.
\end{proof}

\begin{lemma} \label{7}
Under the assumption in Lemma \ref{3}, if $T_{max} < \infty $ , then $\lim sup_{t \rightarrow T_{max}^{+}} (\Vert u \Vert_{\infty} +\Vert v \Vert_{\infty})=\infty$.
\end{lemma}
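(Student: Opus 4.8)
The plan is to refine the blow-up criterion of Lemma \ref{3} by showing that the only genuinely uncontrolled quantities are $\Vert u \Vert_{\infty}$ and $\Vert v \Vert_{\infty}$. Suppose, for contradiction, that $T_{max}<\infty$ yet $\limsup_{t\to T_{max}^{-}}(\Vert u(\cdot,t) \Vert_{\infty}+\Vert v(\cdot,t) \Vert_{\infty})<\infty$, so that there is $M>0$ with $\Vert u(\cdot,t) \Vert_{\infty}+\Vert v(\cdot,t) \Vert_{\infty}\leqslant M$ for all $t\in(0,T_{max})$. I would then show that each remaining term appearing in the criterion of Lemma \ref{3} stays bounded on $(0,T_{max})$, forcing the $\limsup$ there to be finite, which is the desired contradiction.

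First, the $L^{1}$ terms are immediate: mass conservation (\ref{4}) gives $\Vert u(\cdot,t) \Vert_{1}=\Vert u_{0} \Vert_{1}$ and $\Vert v(\cdot,t) \Vert_{1}=\Vert v_{0} \Vert_{1}$, while Lemma \ref{6} bounds $\Vert w(\cdot,t) \Vert_{1}$ and $\Vert \nabla w(\cdot,t) \Vert_{1}$ uniformly in $t$ by the initial data. Hence $\Vert w(\cdot,t) \Vert_{W^{1,1}(\mathbb{R}^{N})}$ is bounded on $(0,T_{max})$, and it remains only to control $\Vert w(\cdot,t) \Vert_{W^{1,\infty}(\mathbb{R}^{N})}=\Vert w(\cdot,t) \Vert_{\infty}+\Vert \nabla w(\cdot,t) \Vert_{\infty}$.

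The key step is to bound $w$ in $W^{1,\infty}$ purely in terms of $M$ and the initial data through the Duhamel representation of $w$ from Lemma \ref{3}. For the sup norm I would use the $L^{\infty}$-contractivity $\Vert e^{(t-s)(\Delta-\lambda)} f \Vert_{\infty}\leqslant e^{-\lambda(t-s)}\Vert f \Vert_{\infty}$ together with $\Vert \alpha u(\cdot,s)+\beta v(\cdot,s) \Vert_{\infty}\leqslant (\alpha+\beta)M$, obtaining
\begin{align*}
\Vert w(\cdot,t) \Vert_{\infty}\leqslant e^{-\lambda t}\Vert w_{0} \Vert_{\infty}+(\alpha+\beta)M\int_{0}^{t}e^{-\lambda(t-s)}ds\leqslant \Vert w_{0} \Vert_{\infty}+\frac{(\alpha+\beta)M}{\lambda},
\end{align*}
which is finite and independent of $t$. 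For the gradient I would invoke the smoothing estimate $\Vert \nabla e^{(t-s)(\Delta-\lambda)} f \Vert_{\infty}\leqslant c(t-s)^{-\frac{1}{2}}e^{-\lambda(t-s)}\Vert f \Vert_{\infty}$ (the same type used in Lemma \ref{6}), giving
\begin{align*}
\Vert \nabla w(\cdot,t) \Vert_{\infty}\leqslant e^{-\lambda t}\Vert \nabla w_{0} \Vert_{\infty}+c(\alpha+\beta)M\int_{0}^{\infty}\tau^{-\frac{1}{2}}e^{-\lambda \tau}d\tau,
\end{align*}
and the last integral converges by the definition of the Gamma function, exactly as in Lemma \ref{6}. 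Thus $\Vert w(\cdot,t) \Vert_{W^{1,\infty}(\mathbb{R}^{N})}$ is bounded on $(0,T_{max})$.

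Combining the $L^{1}$, $W^{1,1}$, $\Vert w \Vert_{\infty}$ and $\Vert \nabla w \Vert_{\infty}$ bounds with the assumed bound on $\Vert u \Vert_{\infty}+\Vert v \Vert_{\infty}$ shows that the full quantity in the criterion of Lemma \ref{3} stays finite as $t\to T_{max}$, contradicting $T_{max}<\infty$. I expect the main obstacle to be the gradient estimate: one must verify that the singular kernel $(t-s)^{-1/2}$ is integrable against the exponential weight and that the $L^{\infty}\to L^{\infty}$ gradient bound for $e^{t(\Delta-\lambda)}$ is legitimately available in this Cauchy setting, which is precisely what the semigroup estimate cited in Lemma \ref{6} supplies.
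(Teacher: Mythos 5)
Your proposal is correct and follows essentially the same route as the paper: argue by contradiction, use mass conservation and Lemma \ref{6} for the $L^{1}$ and $W^{1,1}$ quantities, and bound $\Vert w\Vert_{\infty}$ and $\Vert \nabla w\Vert_{\infty}$ via the Duhamel formula with the $L^{\infty}$ and gradient smoothing estimates for $e^{t(\Delta-\lambda)}$, thereby contradicting the extensibility criterion of Lemma \ref{3}. The only cosmetic difference is that you bound the singular time integral by the Gamma function over $(0,\infty)$ while the paper bounds it by $CT_{max}^{1/2}$; both are valid.
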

\begin{proof}
We will argue by contradiction. Suppose that there exists $C>0$ such that
\begin{align}\label{7}
        \Vert u(\cdot,t) \Vert_{\infty}, \Vert v(\cdot,t)\Vert_{\infty} \leqslant C \quad \text{for all} \quad t\in(0,T_{max}).
\end{align}
By the semigroup form solution in Lemma \ref{3}, we have
\begin{align*}
    \Vert \nabla w(\cdot,t) \Vert_{\infty}& \leqslant \Vert e^{t(\Delta -\lambda)} \nabla w_{0} \Vert_{\infty} \ +\int_{0}^{t} \Vert \nabla e^{(t-s)(\Delta-\lambda)}(\alpha u(\cdot,s)+\beta v (\cdot,s))\Vert_{\infty} ds \\
    &\leqslant\Vert \nabla w_{0} \Vert_{\infty}+(\alpha+\beta)C\int_{0}^{t} e^{-\lambda(t-s)}(t-s)^{-\frac{1}{2}}ds
\end{align*}
for all $t\in(0,T_{max})$. Thus we have
\begin{equation} \label{8}
    \Vert \nabla w(\cdot,t) \Vert_{\infty} \leqslant \Vert \nabla w_{0} \Vert_{\infty} +(\alpha+\beta)CT_{max}^{\frac{1}{2}}.
\end{equation}
Similarly, we obtain
\begin{equation} \label{9}
    \Vert w(\cdot,t)\Vert_{\infty} \leqslant \Vert w_{0} \Vert_{\infty} +(\alpha+\beta)CT_{max}.
\end{equation}
By combining (\ref{7}), (\ref{8}), (\ref{9}), Lemma \ref{6}, if $T_{max}$ is finite, then the solution of (\ref{1}) cannot blow up in finite time. This is a contradiction to blow-up criteria of Lemma \ref{3}.
\end{proof}

\begin{lemma}\label{11}
    Under the assumption as in Lemma \ref{3}, the energy functional defined in (\ref{2}) satisfies the following properties:
    \begin{align*}
        \mathcal{F}(u,v,w)(t) +\int_{0}^{t} D(s) ds &\leqslant (\alpha \chi +\beta \xi)\int_{0}^{t}\int_{\mathbb{R}^{N}} (\alpha  u(\cdot,t) +\beta  v(\cdot,s))w(\cdot,s) dx ds \\
        & \quad +(\alpha \chi+ \beta \xi)\int_{0}^{t} \mathcal{F}(u,v,w)(s) ds + \mathcal{F}(u_{0},v_{0},w_{0}),
    \end{align*}
    where
    \begin{align*}
        D(t)&:=\frac{1}{2}\int_{\mathbb{R}^{N}} w_{t}^{2}(\cdot,t) dx dt +\frac{\alpha}{2}\int_{\mathbb{R}^{N}} \left \vert \frac{\nabla u(\cdot,t)}{\sqrt{\chi(u(\cdot,t)+1)}}-\sqrt{\chi(u(\cdot,t)+1)}\right \vert^{2} dx\\
        & \quad + \frac{\beta}{2}\int_{\mathbb{R}^{N}} \left \vert \frac{\nabla v(\cdot,t)}{\sqrt{\xi(v(\cdot,t)+1)}}-\sqrt{\xi(v(\cdot,t)+1)}\right \vert^{2} dx
    \end{align*}
    for all $t \in (0,T_{max})$.
\end{lemma}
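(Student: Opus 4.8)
The plan is to differentiate $\mathcal{F}$ in time, recognize the natural dissipation as a sum of perfect squares, and then absorb the coupling created by $\chi\neq\xi$ and by the shift in the entropy into the right-hand side by means of a weighted Young inequality. Everything rests on the regularity recorded in Lemma \ref{3}: the memberships $w\in C^{1}((0,T_{max});H^{1}(\mathbb{R}^{N}))$ and $u,v\in C^{0}((0,T_{max});H^{2}(\mathbb{R}^{N}))$ together with $u,v\in L^{1}\cap BUC$ are precisely what make each integral finite, let me differentiate under the integral sign, and ensure that the integrations by parts below produce no boundary contribution at infinity, so that (unlike \cite{winkler2}) no cut-off is needed.

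First I would compute $\frac{d}{dt}\mathcal{F}$ termwise. Collecting the three contributions carrying a factor $w_{t}$ (from $\frac12\int|\nabla w|^{2}$, from $\frac{\lambda}{2}\int w^{2}$, and from $-\int(\alpha u+\beta v)w$) and using the third equation of (\ref{1}) in the form $\Delta w-\lambda w+\alpha u+\beta v=w_{t}$, these combine neatly into $-\int_{\mathbb{R}^{N}}w_{t}^{2}\,dx$. For the entropy terms I would invoke the mass conservation (\ref{4}), which annihilates the constant $+1$ in $\log(u+1)+1$, so that the $u$-contribution collapses to $\alpha\int u_{t}\,\phi_{u}\,dx$ with $\phi_{u}:=\frac1\chi\log(u+1)-w$, and symmetrically the $v$-contribution to $\beta\int v_{t}\,\phi_{v}\,dx$ with $\phi_{v}:=\frac1\xi\log(v+1)-w$.

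The key algebraic step is to substitute $u_{t}=\nabla\cdot(\nabla u-\chi u\nabla w)$, integrate by parts, and rewrite the flux through $\nabla\phi_{u}$. Since $\nabla u-\chi u\nabla w=\chi(u+1)\nabla\phi_{u}+\chi\nabla w$, one gets
\begin{align*}
\alpha\int_{\mathbb{R}^{N}}u_{t}\,\phi_{u}\,dx
=-\alpha\chi\int_{\mathbb{R}^{N}}(u+1)|\nabla\phi_{u}|^{2}\,dx
-\alpha\chi\int_{\mathbb{R}^{N}}\nabla w\cdot\nabla\phi_{u}\,dx,
\end{align*}
and the first term equals $-\alpha\int_{\mathbb{R}^{N}}\bigl|\frac{\nabla u}{\sqrt{\chi(u+1)}}-\sqrt{\chi(u+1)}\,\nabla w\bigr|^{2}dx$, i.e. twice the $u$-contribution to $D(t)$. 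To the genuinely new cross term I would apply the weighted Young inequality $\alpha\chi\,|\nabla w\cdot\nabla\phi_{u}|\le\frac{\alpha\chi}{2}(u+1)|\nabla\phi_{u}|^{2}+\frac{\alpha\chi}{2(u+1)}|\nabla w|^{2}$ and then use $u+1\ge1$; this gives back one half of the square to the dissipation and leaves only $\frac{\alpha\chi}{2}\int|\nabla w|^{2}$. Treating $v$ the same way and recombining with $-\int w_{t}^{2}$ yields
\begin{align*}
\frac{d}{dt}\mathcal{F}+D\le\frac{\alpha\chi+\beta\xi}{2}\int_{\mathbb{R}^{N}}|\nabla w|^{2}\,dx.
\end{align*}

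Finally, since $\frac{\lambda}{2}\int w^{2}\ge0$ and $(u+1)\log(u+1),(v+1)\log(v+1)\ge0$ for $u,v\ge0$, the quantity $\frac12\int|\nabla w|^{2}$ is dominated by $\mathcal{F}(t)+\int(\alpha u+\beta v)w\,dx$, so the right-hand side above is at most $(\alpha\chi+\beta\xi)\bigl(\mathcal{F}(t)+\int(\alpha u+\beta v)w\,dx\bigr)$. Integrating this differential inequality over $(0,t)$ and moving $\mathcal{F}(u_{0},v_{0},w_{0})$ to the right produces exactly the stated estimate. I expect the main obstacle to be not the algebra but its rigorous justification on $\mathbb{R}^{N}$: differentiating $\mathcal{F}$ under the integral sign and discarding the boundary terms in the integrations by parts, for which I would lean on the $H^{1}$/$H^{2}$ regularity and the $L^{1}$-decay from Lemma \ref{3}, and on verifying that the entropy integrals are finite, which follows from the convexity bound $(u+1)\log(u+1)\le C(\|u\|_{\infty})\,u$ together with $u\in L^{1}(\mathbb{R}^{N})$.
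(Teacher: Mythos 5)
Your proof is correct and follows essentially the same route as the paper: differentiate $\mathcal{F}$, use the third equation of (\ref{1}) to collapse the $w_{t}$-contributions into $-\int_{\mathbb{R}^{N}}w_{t}^{2}\,dx$, apply Young's inequality to the cross term so that beyond the dissipation $D(t)$ only $\frac{\alpha\chi+\beta\xi}{2}\int_{\mathbb{R}^{N}}|\nabla w|^{2}\,dx$ survives, bound that by $(\alpha\chi+\beta\xi)\bigl(\mathcal{F}+\int_{\mathbb{R}^{N}}(\alpha u+\beta v)w\,dx\bigr)$ using the nonnegativity of the remaining terms of $\mathcal{F}$, and integrate in time. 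The only difference is organizational: you package the entropy and transport contributions into the chemical potential $\phi_{u}=\frac{1}{\chi}\log(u+1)-w$ and apply Young to $\nabla w\cdot\nabla\phi_{u}$ with $u+1\geqslant 1$, whereas the paper expands term by term, completes the square explicitly, and applies Young to $\frac{u}{u+1}\nabla u\cdot\nabla w$ with $\frac{u^{2}}{u+1}\leqslant u$ --- algebraically equivalent steps yielding the identical differential inequality (\ref{16}).
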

\begin{proof}
    For convenience, $u(\cdot,t),v(\cdot,t),w(\cdot,t)$ are abbreviated as $u,v,w$.
     Since $(u,v,w)$ is a classical solution of (\ref{1}) and  from that fact $(x+1)\log(x+1) \leqslant x^{2}+x$ for $x>0$ , $ \mathcal{F}(u,v,w)(t) $ is well defined and continuously differentiable on $(0,T_{max})$. So we obtain the following equality by directly differentiating.
     \begin{align*}
        \mathcal{F}'(u,v,w)(t)&=\int_{\mathbb{R}^{N}}  \nabla w \nabla w_{t} + \lambda \int_{\mathbb{R}^{N}}  w w_{t} dx\\
        & \quad -\int_{\mathbb{R}^{N}} (\alpha u_{t}+\beta v_{t})w dx -\int_{\mathbb{R}^{N}}(\alpha u+\beta v)w_{t} dx \\
        & \quad +\frac{\alpha}{\chi}\int_{\mathbb{R}^{N}} u_{t} \log(u+1) dx +\frac{\alpha}{\chi}\int_{\mathbb{R}^{N}} u_{t} dx \\
        & \quad +\frac{\beta}{\xi}\int_{\mathbb{R}^{N}} v_{t} \log(v+1) dx +\frac{\beta}{\xi}\int_{\mathbb{R}^{N}} v_{t} dx \\
        &=\int_{\mathbb{R}^{N}} \nabla w \nabla w_{t} dx +\lambda \int_{\mathbb{R}^{N}}  ww_{t} dx \\
        & \quad -\int_{\mathbb{R}^{N}}(\alpha u +\beta v) w_{t} dx -\int_{\mathbb{R}^{N}}(\alpha u_{t}+\beta v_{t})w dx \\
        & \quad +\frac{\alpha}{\chi}\int_{\mathbb{R}^{N}}u_{t} (\log(u+1)+1)dx +\frac{\beta}{\xi}\int_{\mathbb{R}^{N}}v_{t}(\log(v+1)+1)dx.
     \end{align*}
     Put
     \begin{align*}
        I_{1}&:=\int_{\mathbb{R}^{N}}  \nabla w \nabla w_{t} dx +\lambda \int_{\mathbb{R}^{N}}w w_{t} dx -\int_{\mathbb{R}^{N}}(\alpha u +\beta v) w_{t} dx,\\
        I_{2}&:=-\int_{\mathbb{R}^{N}}(\alpha u_{t}+\beta v_{t})w dx, \\
        I_{3}&:=\frac{\alpha}{\chi}\int_{\mathbb{R}^{N}}u_{t} (\log(u+1)+1)dx +\frac{\beta}{\xi}\int_{\mathbb{R}^{N}}v_{t}(\log(v+1)+1)dx.
     \end{align*}
     By integration by parts over $\mathbb{R}^{N}$, we have
     \begin{equation}\label{12}
     \begin{aligned}
        I_{1}&=-\int_{\mathbb{R}^{N}}w_{t} \Delta w dx+\lambda \int_{\mathbb{R}^{N}} w w_{t} dx
         -\int_{\mathbb{R}^{N}}(\alpha u +\beta v)w_{t}dx\\
        &=-\int_{\mathbb{R}^{N}} w_{t} \Delta w dx +\lambda \int_{\mathbb{R}^{N}}  \left(\frac{\Delta w}{\lambda}-\frac{1}{\lambda}w_{t}+\frac{\alpha}{\lambda}u+\frac{\beta}{\lambda}v \right)w_{t}dx - \int_{\mathbb{R}^{N}}(\alpha u +\beta v)w_{t} dx
        \\ &= -\int_{\mathbb{R}^{N}} w_{t}^{2} dx
     \end{aligned}
     \end{equation}
     and
     \begin{equation}\label{13}
     \begin{aligned}
        I_{2}:&=-\alpha \int_{\mathbb{R}^{N}} u_{t} w dx- \beta \int_{\mathbb{R}^{N}} v_{t}w dx \\
        & = \alpha \int_{\mathbb{R}^{N}} \nabla  w\nabla u dx -\alpha \chi \int_{\mathbb{R}^{N}} u \vert \nabla w \vert^{2} dx
         +\beta\int_{\mathbb{R}^{N}}\nabla w\nabla v dx -\beta \xi \int_{\mathbb{R}^{N}} v\vert \nabla w \vert^{2} dx.
     \end{aligned}
     \end{equation}
     By multiplying $\frac{\alpha}{\chi} (\log(u+1)+1)$ to the first equation of (\ref{1}), we have
     \begin{equation*}\label{14}
        \frac{\alpha}{\chi}\int_{\mathbb{R}^{N}}(\log(u+1)+1) u_{t} dx =-\frac{\alpha}{\chi}\int_{\mathbb{R}^{N}}\frac{\vert \nabla u \vert^{2}}{u+1} dx +\alpha \int_{\mathbb{R}^{N}}\frac{u}{u+1}\nabla u \nabla w dx.
     \end{equation*}
     Similarly, by multiplying $\frac{\beta}{\xi} (\log(v+1)+1)$ to the second equation of (\ref{1}), we have
     \begin{equation*}\label{15}
       \frac{\beta}{\xi}\int_{\mathbb{R}^{N}}(\log(v+1)+1) v_{t} dx =-\frac{\beta}{\xi}\int_{\mathbb{R}^{N}}\frac{\vert \nabla v \vert^{2}}{v+1} dx +\beta \int_{\mathbb{R}^{N}}\frac{v}{v+1}\nabla v\nabla w dx.
     \end{equation*}
     Thus
     \begin{equation}\label{17}
     \begin{aligned}
        I_{1}+I_{2}+I_{3}&=-\frac{\alpha}{2\chi} \int_{\mathbb{R}^{N}} \frac{\vert \nabla u \vert^{2}}{u+1}dx -\frac{\alpha}{2}\int_{\mathbb{R}^{N}}\left \vert \frac{\nabla u}{\sqrt{\chi(u+1)}}-\sqrt{\chi(u+1)}\nabla w \right \vert^{2} dx \\
        & \quad -\frac{\alpha \chi}{2} \int_{\mathbb{R}^{N}} u\vert \nabla w \vert^{2} dx+\frac{\alpha\chi}{2}\int_{\mathbb{R}^{N}} \vert \nabla w \vert^{2} dx+  \alpha \int_{\mathbb{R}^{N}}\frac{u}{u+1}\nabla u \nabla w dx \\
        & \quad -\frac{\beta}{2\xi} \int_{\mathbb{R}^{N}} \frac{\vert \nabla b \vert^{2}}{b+1}dx -\frac{\beta}{2}\int_{\mathbb{R}^{N}}\left \vert \frac{\nabla v}{\sqrt{\xi(v+1)}}-\sqrt{\xi(v+1)}\nabla w \right \vert^{2} dx \\
        & \quad -\frac{\beta \xi}{2} \int_{\mathbb{R}^{N}} v\vert \nabla w \vert^{2} dx+\frac{\beta\xi}{2}\int_{\mathbb{R}^{N}} \vert \nabla w \vert^{2} dx+  \beta \int_{\mathbb{R}^{N}}\frac{v}{v+1}\nabla v \nabla w dx \\
        & \quad -\int_{\mathbb{R}^{N}} w_{t}^{2} dx.
     \end{aligned}
     \end{equation}
     By Young's inequality, we obtain
    \begin{equation} \label{18}
    \begin{aligned}
        \alpha \int_{\mathbb{R}^{N}}  \frac{u}{u+1} \nabla u \nabla w dx & \leqslant \frac{\alpha}{2\chi} \int_{\mathbb{R}^{N}}\frac{\vert \nabla u \vert^{2}}{u+1}dx +
        \frac{\alpha \chi}{2}\int_{\mathbb{R}^{N}}\frac{u^{2}}{u+1}\vert \nabla w \vert^{2}dx \\
        & \leqslant  \frac{\alpha}{2\chi} \int_{\mathbb{R}^{N}}\frac{\vert \nabla u \vert^{2}}{u+1}dx  +
        \frac{\alpha \chi}{2}\int_{\mathbb{R}^{N}}u\vert \nabla w \vert^{2}dx
    \end{aligned}
    \end{equation}
    and
    \begin{equation} \label{19}
        \beta \int_{\mathbb{R}^{N}}  \frac{v}{v+1} \nabla v \nabla w dx  \leqslant  \frac{\beta}{2\xi} \int_{\mathbb{R}^{N}}\frac{\vert \nabla v \vert^{2}}{v+1}dx  +
        \frac{\beta \xi}{2}\int_{\mathbb{R}^{N}}v\vert \nabla w \vert^{2}dx.
    \end{equation}
     By inserting (\ref{18})-(\ref{19}) into (\ref{17}), we have
     \begin{equation}\label{16}
     \begin{aligned}
        \mathcal{F}'(u,v,w)(t)& \leqslant   -\frac{\alpha}{2}\int_{\mathbb{R}^{N}}\left \vert \frac{\nabla u}{\sqrt{\chi(u+1)}}-\sqrt{\chi(u+1)}\nabla w \right \vert^{2} dx \\
        & \quad  -\frac{\beta}{2}\int_{\mathbb{R}^{N}}\left \vert \frac{\nabla v}{\sqrt{\xi(v+1)}}-\sqrt{\xi(v+1)}\nabla w \right \vert^{2} dx \\
        & \quad +\frac{\alpha\chi}{2}\int_{\mathbb{R}^{N}} \vert \nabla w \vert^{2} dx\ +\frac{\beta\xi}{2}\int_{\mathbb{R}^{N}} \vert \nabla w \vert^{2} dx -\frac{1}{2}\int_{\mathbb{R}^{N}} w_{t}^{2} dx.
     \end{aligned}
     \end{equation}

     By definition of $\mathcal{F}(u,v,w)(t)$, we have
     \begin{equation}\label{20}
        \frac{\alpha \chi+\beta \xi}{2} \int_{\mathbb{R}^{N}}\vert \nabla w\vert^{2}dx \leqslant (\alpha \chi +\beta \xi)\mathcal{F}(u,v,w)(t)  +(\alpha\chi+\beta\xi)\int_{\mathbb{R}^{N}}(\alpha u +\beta v)w dx.
     \end{equation}
     Inserting (\ref{20}) into (\ref{16}) and integrating over the time , we are done.

\end{proof}

Let $\eta \in C^{\infty}(\mathbb{R})$ be nonincreasing cut-off function with $\eta=1$ in $(-\infty,0]$ and $\eta=0$ in $[1,\infty)$.
Then we can construct the radially symmetric function $\zeta_{R}$ such that
\begin{equation}\label{21}
 \zeta_{R}(x):=\eta(\vert x \vert -R) \quad \text {for} \quad R>0,
\end{equation}
with supp$(\zeta_{R}) \subseteq \overline{B_{R+1}}$ and  supp$(\nabla \zeta_{R})$ $\subseteq \overline{B_{R+1}}\backslash B_{R}$.\\

\begin{lemma}\label{70}
    The classical radially symmetry solution $w$ of (\ref{1}) satisfies the following inequality for some fixed $r_{0}(t) \in [1,2]$:\\
    If $ r \in (0,r_{0}(t)]$, then
    \begin{align}\label{22}
        w(r,t) \leqslant K+ CKr^{1-N}
    \end{align}
    and
    if $r > r_{0}(t)$, then
    \begin{align}\label{23}
        w(r,t) \leqslant K+CKr_{0}^{1-N}(t),
    \end{align}
    where $C$ is for some positive constant depending on $\lambda,\alpha,\beta$.
\end{lemma}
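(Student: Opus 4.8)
The plan is to combine the global $L^1$ bounds on $w$ and $\nabla w$ from Lemma \ref{6} with the radial structure of the solution. First I would record that, since $K = \Vert u_{0}\Vert_{1} + \Vert v_{0}\Vert_{1} + \Vert w_{0}\Vert_{1} + \Vert \nabla w_{0}\Vert_{1}$, Lemma \ref{6} yields $\Vert w(\cdot,t)\Vert_{1} \leqslant CK$ and $\Vert \nabla w(\cdot,t)\Vert_{1} \leqslant CK$ for all $t \in (0,T_{max})$ with $C = C(\lambda,\alpha,\beta)$. Writing $w(\cdot,t)$ as a function of $r = |x|$ and passing to polar coordinates, these become $\omega_{N-1}\int_{0}^{\infty} w(r,t)\,r^{N-1}\,dr \leqslant CK$ and $\omega_{N-1}\int_{0}^{\infty} |w_{r}(r,t)|\,r^{N-1}\,dr \leqslant CK$, where $\omega_{N-1}$ denotes the surface area of the unit sphere in $\mathbb{R}^{N}$. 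The positivity $w \geqslant 0$ from Lemma \ref{3} will be used throughout, as will the $C^{1}$ regularity of $w(\cdot,t)$.

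The second step is to locate the reference radius $r_{0}(t)$. Applying the mean value theorem for integrals to the continuous function $w(\cdot,t)$ on $[1,2]$, there exists $r_{0}(t) \in [1,2]$ with $w(r_{0}(t),t) = \int_{1}^{2} w(r,t)\,dr$. Since $r^{N-1} \geqslant 1$ on $[1,2]$ and $w \geqslant 0$, this average is dominated by the weighted integral, so $w(r_{0}(t),t) \leqslant \int_{0}^{\infty} w(r,t)\,r^{N-1}\,dr \leqslant CK$. This point furnishes the anchoring value from which both estimates propagate.

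For the pointwise bounds I would integrate the fundamental theorem of calculus in the radial variable. For $r \in (0,r_{0}(t)]$ I write $w(r,t) = w(r_{0},t) - \int_{r}^{r_{0}} w_{\rho}(\rho,t)\,d\rho \leqslant w(r_{0},t) + \int_{r}^{r_{0}} |w_{\rho}(\rho,t)|\,d\rho$, then insert the factor $\rho^{N-1}\rho^{1-N}$ and use $\rho^{1-N} \leqslant r^{1-N}$ for $\rho \geqslant r$ to bound the remaining integral by $r^{1-N}\int_{0}^{\infty} |w_{\rho}|\rho^{N-1}\,d\rho \leqslant CK\,r^{1-N}$, giving $w(r,t) \leqslant CK + CK r^{1-N}$, which is (\ref{22}). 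For $r > r_{0}(t)$ the same identity integrated outward from $r_{0}$, together with $\rho^{1-N} \leqslant r_{0}^{1-N}(t)$ for $\rho \geqslant r_{0}$, yields $w(r,t) \leqslant w(r_{0},t) + CK\,r_{0}^{1-N}(t) \leqslant CK + CK r_{0}^{1-N}(t)$, which is (\ref{23}).

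The main obstacle is the bookkeeping that makes the weighted $L^{1}$ bound (the full $\Vert \nabla w\Vert_{1}$) collapse into a pointwise estimate: the essential device is the monotonicity $\rho^{1-N} \leqslant r^{1-N}$, which pulls the smallest radius out of the radial integral while retaining the natural $r^{N-1}$ weight on $|w_{\rho}|$. One should also confirm that the $C^{1}$ regularity of $w(\cdot,t)$ justifies the fundamental theorem of calculus representation on $(0,\infty)$, and that the constants arising from Lemma \ref{6} are absorbed into a single generic $C(\lambda,\alpha,\beta)$; the non-decaying additive term in (\ref{22})--(\ref{23}) then originates precisely from the estimate $w(r_{0}(t),t) \leqslant CK$ of the second step.
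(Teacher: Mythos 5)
Your proof is correct and follows essentially the same route as the paper: anchor $w$ at a radius $r_{0}(t)\in[1,2]$ via the mean value theorem and the $L^{1}$ bound on $w$ from Lemma \ref{6}, then propagate pointwise by the fundamental theorem of calculus in $r$, using $\rho^{1-N}\leqslant r^{1-N}$ (resp.\ $\rho^{1-N}\leqslant r_{0}^{1-N}(t)$) together with the $L^{1}$ bound on $\nabla w$. The only cosmetic difference is that the paper applies the mean value theorem directly to the weighted integral $\int_{1}^{2}r^{N-1}w\,dr$ rather than to $\int_{1}^{2}w\,dr$, which changes nothing.
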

\begin{proof}
    By Lemma \ref{6} and mean value theorem, for $r \in (0,r_{0}(t)] $,
    \begin{equation*}
        w(r_{0},t) \leqslant r_{0}^{N-1}(t)w(r_{0}(t),t) = \int_{1}^{2}r^{N-1}w(r,t)dr \leqslant K.
    \end{equation*}
    So  we have
    \begin{align*}
        w(r,t)&=w(r_{0}(t),t)-\int_{r}^{r_{0}(t)}w_{r}(\rho,t) d\rho \\
        &\leqslant K +\int_{r}^{r_{0}(t)}\vert w_{r}(\rho,t)\vert d\rho \\
        & \leqslant K+r^{1-N}\int_{r}^{r_{0}(t)}\rho ^{N-1} \vert w_{r}(\rho,t)\vert d \rho \\
        & \leqslant K+CKr^{1-N}.
    \end{align*}
    Also if $ r > r_{0}(t)$, then
    \begin{align*}
        w(r,t) & = w(r_{0}(t),t)- \int_{r}^{r_{0}(t)}w_{r}(\rho,t)d\rho \\
        & \leqslant K+\int_{r_{0}(t)}^{r}\vert w_{r}(\rho,t)\vert d\rho \\
        & \leqslant K +r_{0}^{1-N}(t) \int_{r_{0}(t)}^{r} \rho^{N-1}\vert w_{r}(\rho,t)\vert d\rho \\
        & \leqslant K +CKr_{0}^{1-N}(t).
    \end{align*}

\end{proof}

\begin{lemma}\label{24}
    The classical radially symmetry solution of (\ref{1}) satisfies
    \begin{align*}
        \int_{\mathbb{R}^{N}}(\alpha u +\beta v) w dx \leqslant 3 \int_{B_{2}}\vert \nabla w \vert^{2} dx +CK^{2}+CK^{\frac{4}{N+4}}\Vert \Delta w -\lambda w+\alpha u +\beta v \Vert_{2}^{\frac{2N+4}{N+4}},
    \end{align*}
    where C is for some positive constant depending on $\lambda, \alpha,\beta ,N$ for $t \in (0,T_{max})$.
\end{lemma}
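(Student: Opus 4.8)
The plan is to cut the integral near the origin using the function $\zeta_{1}$ from (\ref{21}), which equals $1$ on $B_{1}$ and is supported in $B_{2}$, and to write
\begin{equation*}
\int_{\mathbb{R}^{N}}(\alpha u+\beta v)w\,dx=\int_{\mathbb{R}^{N}}(\alpha u+\beta v)w\,\zeta_{1}\,dx+\int_{\mathbb{R}^{N}}(\alpha u+\beta v)w\,(1-\zeta_{1})\,dx .
\end{equation*}
On $\mathrm{supp}(1-\zeta_{1})\subseteq\mathbb{R}^{N}\setminus B_{1}$ one has $r\geqslant 1$, so Lemma \ref{70}, in both the branch $r\leqslant r_{0}(t)$ and the branch $r>r_{0}(t)$ (using $r^{1-N}\leqslant 1$ and $r_{0}^{1-N}(t)\leqslant 1$, since $r_{0}(t)\in[1,2]$), yields the uniform pointwise bound $w\leqslant CK$ on that set. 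Combining this with the mass identity (\ref{4}), which gives $\Vert u\Vert_{1}=\Vert u_{0}\Vert_{1}\leqslant K$ and $\Vert v\Vert_{1}=\Vert v_{0}\Vert_{1}\leqslant K$, the exterior integral is controlled by $CK(\alpha\Vert u\Vert_{1}+\beta\Vert v\Vert_{1})\leqslant CK^{2}$.

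For the localized piece I would use the third equation of (\ref{1}) to substitute $\alpha u+\beta v=w_{t}-\Delta w+\lambda w$ and integrate by parts (no boundary terms arise since $\zeta_{1}$ is compactly supported and $w$ is smooth), obtaining
\begin{equation*}
\int_{\mathbb{R}^{N}}(\alpha u+\beta v)w\,\zeta_{1}\,dx=\int_{\mathbb{R}^{N}}w_{t}w\,\zeta_{1}\,dx+\int_{\mathbb{R}^{N}}|\nabla w|^{2}\zeta_{1}\,dx+\int_{\mathbb{R}^{N}}w\,\nabla w\cdot\nabla\zeta_{1}\,dx+\lambda\int_{\mathbb{R}^{N}}w^{2}\zeta_{1}\,dx .
\end{equation*}
The second term is directly $\leqslant\int_{B_{2}}|\nabla w|^{2}\,dx$, furnishing one unit of the leading coefficient. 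The third term is supported on the annulus $B_{2}\setminus B_{1}$, where $w\leqslant CK$ and $|\nabla\zeta_{1}|\leqslant C$; bounding it by $CK\Vert\nabla w\Vert_{1}$ and invoking Lemma \ref{6} ($\Vert\nabla w\Vert_{1}\leqslant CK$) produces a further $CK^{2}$.

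The core of the argument is the treatment of $\int w_{t}w\,\zeta_{1}$ and $\lambda\int w^{2}\zeta_{1}$, which I would control through the Gagliardo--Nirenberg inequality
\begin{equation*}
\Vert w\zeta_{1}\Vert_{2}\leqslant C\Vert\nabla(w\zeta_{1})\Vert_{2}^{\frac{N}{N+2}}\Vert w\zeta_{1}\Vert_{1}^{\frac{2}{N+2}},
\end{equation*}
valid for $N\geqslant 3$, in which $\Vert w\zeta_{1}\Vert_{1}\leqslant\Vert w\Vert_{1}\leqslant CK$ by Lemma \ref{6} and $\Vert\nabla(w\zeta_{1})\Vert_{2}^{2}\leqslant 2\int_{B_{2}}|\nabla w|^{2}\,dx+CK^{2}$ (again estimating the annular piece with $w\leqslant CK$). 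Writing $W:=\Vert w_{t}\Vert_{2}=\Vert\Delta w-\lambda w+\alpha u+\beta v\Vert_{2}$ and using Cauchy--Schwarz, $\int w_{t}w\,\zeta_{1}\leqslant W\Vert w\zeta_{1}\Vert_{2}$, Young's inequality with the conjugate exponents $\tfrac{2(N+2)}{N}$ and $\tfrac{2(N+2)}{N+4}$ gives
\begin{equation*}
\int_{\mathbb{R}^{N}}w_{t}w\,\zeta_{1}\,dx\leqslant\tfrac{1}{2}\Vert\nabla(w\zeta_{1})\Vert_{2}^{2}+CK^{\frac{4}{N+4}}W^{\frac{2N+4}{N+4}},
\end{equation*}
while a parallel Young estimate (exponent $\tfrac{N+2}{N}$) yields $\lambda\int w^{2}\zeta_{1}\leqslant\tfrac{1}{2}\Vert\nabla(w\zeta_{1})\Vert_{2}^{2}+CK^{2}$. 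Substituting the bound for $\Vert\nabla(w\zeta_{1})\Vert_{2}^{2}$ turns each $\tfrac12\Vert\nabla(w\zeta_{1})\Vert_{2}^{2}$ into $\int_{B_{2}}|\nabla w|^{2}\,dx+CK^{2}$, so the three gradient contributions (one direct, two from Young) add up to exactly $3\int_{B_{2}}|\nabla w|^{2}\,dx$, which is the constant in the statement.

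The main obstacle I anticipate is the exponent bookkeeping in the Gagliardo--Nirenberg/Young step: one must verify that the interpolation exponent $\tfrac{N}{N+2}$ together with the $L^{1}$-control $\Vert w\zeta_{1}\Vert_{1}\leqslant CK$ feeds, through Young's inequality, into precisely the powers $K^{4/(N+4)}$ and $W^{(2N+4)/(N+4)}$, and that the companion quantity $\lambda\int w^{2}\zeta_{1}$ contributes only $CK^{2}$ with no $W$-dependence, so that no uncontrolled higher power of $K$ or of $\Vert w_{t}\Vert_{2}$ survives. A secondary delicate point is the uniform estimate $w\leqslant CK$ on $\{r\geqslant 1\}$, which must be read off from both regimes of Lemma \ref{70} and is exactly what collapses every annular and exterior term to $CK^{2}$.
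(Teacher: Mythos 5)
Your argument is essentially the paper's own proof: the same cutoff decomposition with $\zeta_{1}$, the same use of Lemma \ref{70} and mass conservation to reduce the exterior piece to $CK^{2}$, the same substitution of $\alpha u+\beta v$ via the third equation of (\ref{1}), and the same Gagliardo--Nirenberg/Young bookkeeping that produces the coefficient $3$ and the exponents $K^{4/(N+4)}$ and $\Vert f\Vert_{2}^{(2N+4)/(N+4)}$ (the paper treats the annular cross term by Young's inequality and $\int\vert\nabla\zeta\vert^{2}w^{2}\leqslant CK^{2}$ rather than by $\Vert\nabla w\Vert_{1}\leqslant CK$, but both work). The only point to tidy is that the paper tests with $\zeta_{1}^{2}w$ rather than $\zeta_{1}w$, so that the quantities $\int\zeta^{2}w^{2}$ and $\Vert\zeta w\Vert_{2}$ are literal squares; with your choice you should note that $\int w^{2}\zeta_{1}\leqslant\Vert w\zeta_{1}\Vert_{2}^{2}+CK^{2}$ (since $\zeta_{1}=\zeta_{1}^{2}$ on $B_{1}$ and $w\leqslant CK$ with $\Vert w\Vert_{1}\leqslant CK$ on $B_{2}\setminus B_{1}$), after which your version closes.
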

\begin{proof}
    Put $\zeta:=\zeta_{1}$. Due to  Lemma \ref{70}, we obtain $w(r,t) \leqslant CK $ for $r\geqslant 1 $. Hence
    \begin{equation}\label{25}
    \begin{aligned}
        \int_{\mathbb{R}^{N}}(\alpha u +\beta v)w dx & = \int_{\mathbb{R}^{N}} \zeta^{2}(\alpha u +\beta v) w dx +\int_{\mathbb{R}^{N}}(1-\zeta^{2})(\alpha u +\beta v) w dx \\
        & \leqslant \int_{\mathbb{R}^{N}} \zeta^{2}(\alpha u +\beta v) w dx + \int_{\mathbb{R}^{N}\backslash B_{1}} (\alpha u +\beta v) w dx \\
        & \leqslant \int_{\mathbb{R}^{N}} \zeta^{2}(\alpha u +\beta v) w dx  +CK \int_{\mathbb{R}^{N}}\alpha u +\beta v dx \\
        & \leqslant \int_{\mathbb{R}^{N}} \zeta^{2}(\alpha u +\beta v) w dx + CK^{2}.
    \end{aligned}
    \end{equation}
    Let $f:=-\Delta w +\lambda w -\alpha u -\beta v $. By multiplying $\zeta^{2} w $ both side of this identity, and then integral over $\mathbb{R}^{N}$ and using H\"older's and Young's inequality, we have
    \begin{align*}
        \int_{\mathbb{R}^{N}}\zeta^{2}(\alpha u +\beta v)w dx &= \int_{\mathbb{R}^{N}} \nabla w \nabla (\zeta^{2} w) dx +\int_{\mathbb{R}^{N}}\zeta^{2} w^{2} dx - \int_{\mathbb{R}^{N}}\zeta^{2} f w dx \\
        & =\int_{\mathbb{R}^{N}}\zeta^{2} \vert \nabla w \vert^{2} dx + 2\int_{\mathbb{R}^{N}}\zeta w \nabla w \nabla zeta dx +\int_{\mathbb{R}^{N}}\zeta^{2}w^{2} dx  -\int_{\mathbb{R}^{N}}\zeta^{2} f w dx \\ 
        & \leqslant 2\int_{B_{2}}\vert \nabla w \vert^{2} dx +\int_{\mathbb{R}^{N}}\vert \nabla \zeta \vert^{2} w^{2} dx +\int_{\mathbb{R}^{N}}\zeta^{2}w^{2} dx  + \Vert  f\Vert_{2} \Vert \zeta w \Vert_{2} \\
        & :=I_{1}+I_{2}+I_{3}+I_{4}.
    \end{align*}
    Since $ supp \nabla \zeta \subseteq \overline{B_{2}}\backslash B_{1}$, we get
    \begin{equation*}
        I_{2}=\int_{\mathbb{R}^{N}}\vert \nabla \zeta \vert^{2} w^{2} dx \leqslant C_{1}K^{2},
    \end{equation*}
    where $C_{1}=C^{2}\int_{\mathbb{R}^{N}}\vert \nabla \zeta \vert^{2} dx$.  \\
    Using the Gagliardo-Nirenberg and Young's inequality, we obtain
    \begin{align*}
        I_{3}=\int_{\mathbb{R}^{N}}\zeta^{2} w^{2} dx & \leqslant C\Vert \nabla(\zeta w)\Vert_{2}^{\frac{2N}{N+2}}\Vert \zeta w \Vert_{1}^{\frac{4}{N+2}} \\
        & \leqslant \frac{1}{4}\Vert \nabla(\zeta w )\Vert_{2}^{2} +C \Vert \zeta w \Vert_{1}^{2} \\
        & \leqslant \frac{1}{2} \Vert \zeta \nabla w \Vert_{2}^{2} +\frac{1}{2}\Vert w \nabla \zeta \Vert_{2}^{2}+C\Vert \zeta w \Vert_{1}^{2} \\
        & \leqslant \frac{1}{2} \Vert \zeta \nabla w \Vert_{2}^{2} +\frac{c_{3}}{2}K^{2}+CK^{2},
    \end{align*}
    \begin{align*}
        I_{4}=\Vert f \Vert_{2} \Vert \zeta w \Vert_{2}& \leqslant C\Vert f \Vert_{2} \Vert \nabla(\zeta w)\Vert_{2}^{\frac{N}{N+2}}\Vert \zeta w \Vert_{1}^{\frac{2}{N+2}} \\
        & \leqslant \frac{1}{4}\Vert \nabla(\zeta w ) \Vert_{2}^{2} +C\Vert f \Vert_{2}^{\frac{2N+4}{N+4}}\Vert \zeta w \Vert_{1}^{\frac{4}{N+4}} \\
        & \leqslant \frac{1}{2}\int_{\mathbb{R}^{N}}\zeta^{2}\vert \nabla w \vert^{2} dx +\frac{1}{2} \int_{\mathbb{R}^{N}}\vert \nabla \zeta \vert^{2} w^{2} dx +CK^{\frac{4}{N+4}}\Vert f \Vert_{2}^{\frac{2N+4}{N+4}}.
    \end{align*}
    Combining  estimates of $I_{1},I_{2},I_{3},I_{4}$ with (\ref{25}), we are done.
\end{proof}
\begin{lemma}\label{26}
    For any choice $r_{0} \in (0,2)$, the classical radially symmetry solution of (\ref{1}) satisfies
    \begin{align*}
        \int_{B_{2}\backslash B_{r_{0}}}\vert \nabla w \vert^{2}dx \leqslant \epsilon \int_{\mathbb{R}^{N}}(\alpha u +\beta v)w dx +C(\epsilon)K^{2}r_{0}^{1-N}+CKr_{0}^{\frac{1-N}{2}}\Vert w_{t} \Vert_{2}, \quad t\in (0,T_{max}),
    \end{align*}
    where $\epsilon$ is a small enough positive constant and C is a for some positive constant depending on $\lambda,\alpha,\beta,\chi,\xi,N$.
\end{lemma}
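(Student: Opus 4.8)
The plan is to pass to radial coordinates and integrate by parts in the single variable $r$ over $[r_0,2]$, keeping the boundary terms explicit, rather than using a compactly supported cut-off as in Lemma \ref{24}. Writing $w=w(r,t)$ and using that for a radial function $\int_{B_2\setminus B_{r_0}}\vert\nabla w\vert^2\,dx=\omega_{N-1}\int_{r_0}^2 w_r^2\, r^{N-1}\,dr$ (with $\omega_{N-1}$ the surface measure of the unit sphere), I multiply the radial form of the third equation, $(r^{N-1}w_r)_r=r^{N-1}(w_t+\lambda w-\alpha u-\beta v)$, by $w$ and integrate over $[r_0,2]$. After one integration by parts this gives
\begin{align*}
\int_{B_2\setminus B_{r_0}}\vert\nabla w\vert^2\,dx
&=\omega_{N-1}\big[r^{N-1}w_r w\big]_{r=r_0}^{r=2}
-\int_{B_2\setminus B_{r_0}}(w_t+\lambda w)\,w\,dx
+\int_{B_2\setminus B_{r_0}}(\alpha u+\beta v)\,w\,dx.
\end{align*}
The crucial structural input throughout is the pointwise bound $w\leqslant CKr_0^{1-N}$ on the annulus, which follows from Lemma \ref{70} applied to $r\geqslant r_0$, together with the mass bounds $\Vert u\Vert_1+\Vert v\Vert_1\leqslant CK$ from (\ref{4}) and $\Vert w\Vert_1\leqslant CK$ from Lemma \ref{6}.

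For the two interior integrals I would discard $-\lambda\int w^2\leqslant 0$, bound $-\int_{B_2\setminus B_{r_0}}w_t w\leqslant\Vert w_t\Vert_2\,\Vert w\Vert_{L^2(B_2\setminus B_{r_0})}$ by Cauchy--Schwarz, noting $\Vert w\Vert_{L^2(B_2\setminus B_{r_0})}^2\leqslant(\sup_{B_2\setminus B_{r_0}}w)\,\Vert w\Vert_1\leqslant CK^2r_0^{1-N}$, which produces the claimed $CKr_0^{(1-N)/2}\Vert w_t\Vert_2$. The source term is handled by Cauchy--Schwarz followed by Young's inequality:
\begin{align*}
\int_{B_2\setminus B_{r_0}}(\alpha u+\beta v)w\,dx
&\leqslant\Big(\int_{\mathbb{R}^N}(\alpha u+\beta v)\,dx\Big)^{1/2}\Big(\sup_{B_2\setminus B_{r_0}}w\int_{\mathbb{R}^N}(\alpha u+\beta v)w\,dx\Big)^{1/2}\\
&\leqslant CKr_0^{(1-N)/2}\Big(\int_{\mathbb{R}^N}(\alpha u+\beta v)w\,dx\Big)^{1/2},
\end{align*}
so that Young's inequality yields exactly $\epsilon\int_{\mathbb{R}^N}(\alpha u+\beta v)w\,dx+C(\epsilon)K^2r_0^{1-N}$.

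The hard part will be the inner boundary term $-\omega_{N-1}r_0^{N-1}w_r(r_0)w(r_0)$: a naive pointwise gradient estimate only gives order $K^2r_0^{-N}$, which is too large. The key idea is to represent the flux by integrating the equation over $B_{r_0}$,
\begin{align*}
\omega_{N-1}r_0^{N-1}w_r(r_0)=\int_{B_{r_0}}\Delta w\,dx=\int_{B_{r_0}}(w_t+\lambda w-\alpha u-\beta v)\,dx,
\end{align*}
so the boundary term becomes $-w(r_0)\int_{B_{r_0}}(w_t+\lambda w-\alpha u-\beta v)\,dx$. Since the mass quantities are controlled ($\int_{B_{r_0}}w\leqslant CK$, $\int_{B_{r_0}}(\alpha u+\beta v)\leqslant CK$, and $\int_{B_{r_0}}\vert w_t\vert\leqslant\vert B_{r_0}\vert^{1/2}\Vert w_t\Vert_2\leqslant Cr_0^{N/2}\Vert w_t\Vert_2$) and $w(r_0)\leqslant CKr_0^{1-N}$, this term is bounded by $CK^2r_0^{1-N}+CKr_0^{1-N/2}\Vert w_t\Vert_2$, and the last factor is dominated by $CKr_0^{(1-N)/2}\Vert w_t\Vert_2$ since $r_0^{1-N/2}\leqslant r_0^{(1-N)/2}$ in the relevant regime $r_0\leqslant 1$. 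The outer boundary term at $r=2$ is treated the same way using $w(2)\leqslant CK$, contributing only lower-order quantities; the case $r_0\in(1,2)$, where $w\leqslant CK$ on the whole annulus, is absorbed into the generic constants. Collecting all contributions yields the asserted inequality, with the flux representation being the decisive step that replaces $r_0^{-N}$ by $r_0^{1-N}$.
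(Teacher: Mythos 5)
Your argument is correct, and it reaches the stated inequality by a genuinely different route than the paper. The paper never integrates by parts over the annulus at all: it takes the cut-off $\zeta=\zeta_{2}$ (equal to $1$ on $B_{2}$, supported in $\overline{B_{3}}$), tests the identity $-\Delta w+\lambda w-\alpha u-\beta v=f$ with $\zeta^{2}w^{1/2}$ over all of $\mathbb{R}^{N}$, and then uses the same pointwise bound $w\leqslant CKr_{0}^{1-N}$ on $B_{2}\setminus B_{r_{0}}$ to bound the weighted Dirichlet term from below, $\frac14\int\zeta^{2}w^{-1/2}\vert\nabla w\vert^{2}\geqslant cK^{-1/2}r_{0}^{(N-1)/2}\int_{B_{2}\setminus B_{r_{0}}}\vert\nabla w\vert^{2}$; the remaining terms are handled by Young and H\"older exactly as in your treatment of the source and $w_{t}$ terms, and the choice of a globally supported test function means no boundary flux at $r=r_{0}$ ever appears. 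You instead test with $w$ itself on the annulus, which forces you to confront the inner boundary term $-\omega_{N-1}r_{0}^{N-1}w_{r}(r_{0})w(r_{0})$; your divergence-theorem representation $\omega_{N-1}r_{0}^{N-1}w_{r}(r_{0})=\int_{B_{r_{0}}}(w_{t}+\lambda w-\alpha u-\beta v)\,dx$ is exactly the right device to trade the fatal $r_{0}^{-N}$ of a pointwise gradient bound for the admissible $r_{0}^{1-N}$ and $r_{0}^{1-N/2}\leqslant r_{0}^{(1-N)/2}$, and all the auxiliary estimates you invoke ($\Vert w\Vert_{1}\leqslant CK$, mass conservation, $w\leqslant CKr_{0}^{1-N}$ on the annulus from Lemma \ref{70}) are available at this point in the paper. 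The trade-off: the paper's weighted test function $\zeta^{2}w^{1/2}$ is slicker in that it produces the $K^{1/2}r_{0}^{(1-N)/2}$ prefactor automatically and avoids boundary terms, at the cost of a less transparent choice of multiplier; your version is a plain energy identity on the annulus, more elementary in spirit, but stands or falls on the flux identity, which you correctly identified as the decisive step.
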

\begin{proof}
    Put $f:=-\Delta w +\lambda w -\alpha u -\beta v$ and $\zeta:=\zeta_{2}$. By multiplying both sides of $f:=-\Delta w +\lambda w -\alpha u -\beta v$ by   $\zeta^{2} w^{\frac{1}{2}}$, and then integral over $\mathbb{R}^{N}$ and using  Young's inequality, (\ref{23}) and $ supp(\nabla \zeta) \subset \overline{B_{3}}\backslash B_{2}$, we get
    \begin{equation}\label{27}
    \begin{aligned}
        \frac{1}{2}\int_{\mathbb{R}^{N}}\zeta^{2} w^{-\frac{1}{2}}\vert \nabla w \vert^{2} &=-2\int_{\mathbb{R}^{N}}\zeta w^{-\frac{1}{2}} \nabla w \nabla \zeta +\int_{\mathbb{R}^{N}}\zeta^{2}(\alpha u +\beta v)w^{\frac{1}{2}}dx  \\
        & \quad -\lambda \int_{\mathbb{R}^{N}}\zeta^{2} w^{\frac{3}{2}} dx +\int_{\mathbb{R}^{N}}\zeta^{2} f w^{\frac{1}{2}} \\
        & \leqslant \frac{1}{4}\int_{\mathbb{R}^{N}}\zeta^{2} w^{-\frac{1}{2}}\vert \nabla w \vert^{2} dx +4 \int_{\mathbb{R}^{N}}\vert \nabla \zeta \vert^{2} w^{\frac{3}{2}}dx \\
        & \quad + \int_{\mathbb{R}^{N}}\zeta^{2}(\alpha u +\beta v) w^{\frac{1}{2}} dx +\int_{\mathbb{R}^{N}}\zeta^{2} \vert f \vert w^{\frac{1}{2}} dx \\
        & \leqslant \frac{1}{4}\int_{\mathbb{R}^{N}}\zeta^{2} w^{-\frac{1}{2}}\vert \nabla w \vert^{2} dx +4CK^{\frac{3}{2}} \\
        & \quad + \int_{\mathbb{R}^{N}}\zeta^{2}(\alpha u +\beta v) w^{\frac{1}{2}} dx +\int_{\mathbb{R}^{N}}\zeta^{2} \vert f \vert w^{\frac{1}{2}} dx. \\
    \end{aligned}
    \end{equation}
    On the other hand, let $x \in B_{2}\backslash B_{r_{0}}$ with $0<r_{0}<2$.  If $0<r_{0}<1$ , then by using the (\ref{22}), we have
    \begin{align*}
        v(x,t)& \leqslant K+CK \vert x \vert^{1-N} \\
        &\leqslant Kr_{0}^{1-N} +CKr_{0}^{1-N} \\
        & \leqslant CKr_{0}^{1-N}.
    \end{align*}
    If $1 \leqslant r_{0}<2$, then using the (\ref{23}), we get
    \begin{align*}
        w(x,t)& \leqslant K+CKr_{0}^{1-N} \\
        & \leqslant 2^{N-1}Kr_{0}^{1-N} +CKr_{0}^{1-N} \\
        & \leqslant CKr_{0}^{1-N}.
    \end{align*}
    Thus we have $w(x,t) \leqslant CKr_{0}^{1-N}$  for $x\in B_{2}\backslash B_{r_{0}}$. By using this fact and the fact that $\zeta^{2}=1$ on $B_{2}$, we have
    \begin{equation}\label{28}
        \frac{1}{4}\int_{\mathbb{R}^{N}}\zeta^{2}w^{-\frac{1}{2}}\vert \nabla w \vert^{2} dx \geqslant \frac{1}{4}C^{-\frac{1}{2}}K^{-\frac{1}{2}}r_{0}^{\frac{N-1}{2}}\int_{B_{2}\backslash B_{r_{0}}} \vert \nabla w \vert^{2} dx. \\
    \end{equation}
    Inserting (\ref{28}) into (\ref{27}), we have
    \begin{equation}\label{29}
        \int_{B_{2}\backslash B_{r_{0}}} \vert \nabla w \vert^{2} dx \leqslant CK^{2}r_{0}^{\frac{1-N}{2}} +CK^{\frac{1}{2}}r_{0}^{\frac{1-N}{2}}\int_{\mathbb{R}^{N}}(\alpha u +\beta v) w^{\frac{1}{2}} dx + CK^{\frac{1}{2}}r_{0}^{\frac{1-N}{2}} \int_{\mathbb{R}^{N}}\vert f \vert w^{\frac{1}{2}} dx.
    \end{equation}
    By Young's inequality, we have
    \begin{equation}\label{30}
    \begin{aligned}
        CK^{\frac{1}{2}}r_{0}^{\frac{1-N}{2}}\int_{\mathbb{R}^{N}}(\alpha u +\beta v) w^{\frac{1}{2}} dx & \leqslant \epsilon \int_{\mathbb{R}^{N}}(\alpha u +\beta v)w dx  +C(\epsilon) Kr_{0}^{1-N}\int_{\mathbb{R}^{N}}(\alpha u +\beta v) dx \\
        & \leqslant \epsilon \int_{\mathbb{R}^{N}}(\alpha u +\beta v)w dx +C(\epsilon) K^{2}r_{0}^{1-N}.
    \end{aligned}
    \end{equation}
    By H\"older's inequality, we get
    \begin{align}\label{31}
        CK^{\frac{1}{2}}r_{0}^{\frac{1-N}{2}} \int_{\mathbb{R}^{N}}\vert f \vert w^{\frac{1}{2}} dx & \leqslant CK^{\frac{1}{2}}r_{0}^{\frac{1-N}{2}}\Vert f \Vert_{2} \Vert w^{\frac{1}{2}} \Vert_{2} \\ \nonumber
        &= CK^{\frac{1}{2}}r_{0}^{\frac{1-N}{2}}\Vert f \Vert_{2} \Vert w \Vert_{1}^{\frac{1}{2}} \\ \nonumber
        & \leqslant CKr_{0}^{\frac{1-N}{2}}\Vert f \Vert_{2}.
    \end{align}
    By inserting (\ref{30}) and (\ref{31}) into (\ref{29}) and using the fact that $2^{\frac{1-N}{2}} \leqslant r_{0} ^{\frac{1-N}{2}}$, we complete the proof.
\end{proof}

\begin{lemma}\label{32}
    For  $r_{0} \in (0,2)$ and $t \in (0,T_{max})$, the classical radially symmetry solution of (\ref{1}) satisfies
    \begin{align*}
        \int_{B_{r_{0}}} \vert \nabla w \vert^{2} dx & \leqslant C(N,\alpha,\beta,\chi,\xi,N)K + C(N)r_{0}\Vert \Delta w
         -\lambda w +\alpha u +\beta v \Vert_{2}^{2} \\
         & \quad + C(\alpha,\chi,N)\sqrt{K} \left \Vert \frac{\nabla u}{\sqrt{\chi(u+1)}}-\sqrt{\chi(u+1)}\nabla w \right \Vert_{2} \\
         & \quad + C(\beta,\xi,N)\sqrt{K} \left \Vert \frac{\nabla v}{\sqrt{\xi(v+1)}}-\sqrt{\xi(v+1)}\nabla w \right \Vert_{2} \\
         & \quad +2\lambda \int_{B_{2}} w^{2} dx.
    \end{align*}

\end{lemma}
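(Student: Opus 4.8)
The plan is to collapse the problem to a one-dimensional identity in the radial variable coming from the third equation of (\ref{1}), and then to estimate the resulting pieces one at a time; the term carrying the local mass is the one that must be exchanged for the dissipation $G_u,G_v$ of Lemma \ref{11}. Write $\omega_N:=|\partial B_1|$ and $c_N:=|B_1|$. Integrating $\Delta w=w_t+\lambda w-\alpha u-\beta v$ over $B_r$, using the divergence theorem and radial symmetry ($|\nabla w|=|w_r|$), gives for every $r\in(0,r_0)$
\begin{equation*}
\omega_N r^{N-1}w_r(r,t)=\int_{B_r}w_t\,dx+\lambda\int_{B_r}w\,dx-\int_{B_r}(\alpha u+\beta v)\,dx .
\end{equation*}
Multiplying by $w_r(r,t)$ and integrating in $r$ over $(0,r_0)$, and using $\int_{B_{r_0}}|\nabla w|^2=\omega_N\int_0^{r_0}w_r^2 r^{N-1}\,dr$, yields
\begin{equation*}
\int_{B_{r_0}}|\nabla w|^2\,dx=\int_0^{r_0}w_r\left(\int_{B_r}w_t\,dx\right)dr+\lambda\int_0^{r_0}w_r\left(\int_{B_r}w\,dx\right)dr-\int_0^{r_0}w_r\left(\int_{B_r}(\alpha u+\beta v)\,dx\right)dr ,
\end{equation*}
so the proof reduces to estimating these three integrals.

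For the first two I would use $\left|\int_{B_r}g\right|\le\|g\|_2|B_r|^{1/2}\le\sqrt{c_N}\,r^{N/2}\|g\|_2$ together with the Cauchy-Schwarz inequality
\begin{equation*}
\int_0^{r_0}|w_r|\,r^{N/2}\,dr\le\left(\int_0^{r_0}w_r^2\,r^{N-1}\,dr\right)^{1/2}\left(\int_0^{r_0}r\,dr\right)^{1/2}=\frac{r_0}{\sqrt{2\,\omega_N}}\left(\int_{B_{r_0}}|\nabla w|^2\,dx\right)^{1/2}.
\end{equation*}
Taking $g=w_t$ and applying Young's inequality, together with $r_0^2\le 2r_0$, bounds the first integral by $\tfrac14\int_{B_{r_0}}|\nabla w|^2+C(N)r_0\|w_t\|_2^2$; since $w_t=\Delta w-\lambda w+\alpha u+\beta v$ this is exactly the stated $r_0$-term. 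Taking $g=w$ (and using $B_r\subseteq B_2$ for $r\le r_0<2$) and choosing the Young constant accordingly bounds the second integral by $\tfrac14\int_{B_{r_0}}|\nabla w|^2+2\lambda\int_{B_2}w^2\,dx$. The two $\int_{B_{r_0}}|\nabla w|^2$ contributions are absorbed into the left-hand side at the end.

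The heart of the argument---and the step I expect to cost the most work---is the third integral $T:=-\int_0^{r_0}w_r\,m(r)\,dr$ with $m(r):=\int_{B_r}(\alpha u+\beta v)\,dx$. By the mass conservation (\ref{4}) one has $m(r)\le\alpha\|u_0\|_1+\beta\|v_0\|_1\le CK$, but this crude bound is useless, since the weight $r^{N-1}$ forces $\int_0^{r_0}w_r^2 r^{N-1}\,dr$ to behave like $\int_0 r^{1-N}\,dr$ near the origin; the local mass has to be traded genuinely against the dissipation. To this end I would insert the flux identities
\begin{equation*}
\nabla u=\chi(u+1)\nabla w+\sqrt{\chi(u+1)}\,G_u ,\qquad \nabla v=\xi(v+1)\nabla w+\sqrt{\xi(v+1)}\,G_v ,
\end{equation*}
where $G_u:=\frac{\nabla u}{\sqrt{\chi(u+1)}}-\sqrt{\chi(u+1)}\,\nabla w$ and $G_v$ is its analogue---precisely the vector fields whose $L^2$-norms make up the dissipation $D(t)$ of Lemma \ref{11}. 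Using these to express the singular part of $w_r$ through $(G_u)_r$ and $(G_v)_r$ and then applying the Cauchy-Schwarz inequality, the prefactor $\sqrt K$ is furnished by the elementary bound
\begin{equation*}
\int_{B_{r_0}}(u+1)\,dx\le\|u_0\|_1+|B_2|\le CK ,\qquad\text{hence}\qquad \|\sqrt{u+1}\|_{L^2(B_{r_0})}\le C\sqrt K
\end{equation*}
(and similarly for $v$), so that $\|G_u\|_2,\|G_v\|_2$ appear only to the first power, with an additional purely mass term $CK$ left over.

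Collecting the three estimates and absorbing the $\tfrac12\int_{B_{r_0}}|\nabla w|^2$ produced above into the left-hand side then gives the claimed inequality. The delicate point throughout is this last exchange: one must choose the weights in the Cauchy-Schwarz pairing so that the dissipation enters linearly, as $\sqrt K\,\|G_u\|_2$ rather than the $\|G_u\|_2^2$ that the naive pairing $\int\nabla w\cdot G_u$ would yield, and so that the $r^{1-N}$ singularity at the origin is truly absorbed by $G_u,G_v$ instead of resurfacing as an uncontrolled quantity such as $\int_{B_{r_0}}u|\nabla w|^2$ or an entropy $\int_{B_{r_0}}\log(u+1)(\alpha u+\beta v)\,dx$.
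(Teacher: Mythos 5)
Your reduction to the radial identity $\omega_N r^{N-1}w_r=\int_{B_r}w_t+\lambda\int_{B_r}w-\int_{B_r}(\alpha u+\beta v)$ and your treatment of the first two resulting integrals are fine (up to a harmless constant-chasing issue in getting exactly $2\lambda\int_{B_2}w^2$). But the third integral, which you yourself identify as the heart of the matter, is not actually handled: the mechanism you sketch does not go through. The pointwise flux identity $\chi u w_r=u_r-\chi w_r-\sqrt{\chi(u+1)}\,(G_u)_r$ relates $u$ and $w_r$ \emph{at the same radius}, whereas in $T=-\int_0^{r_0}w_r(r)\,m(r)\,dr$ with $m(r)=\int_{B_r}(\alpha u+\beta v)\,dx$ the density sits at radii $\rho<r$ and the gradient at $r$, so there is nothing to substitute into. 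The two ways of forcing the substitution both fail in exactly the ways you list as forbidden: applying Fubini first turns $T$ into $\int_{B_{r_0}}(\alpha u+\beta v)w\,dx$ (circular with Lemma \ref{24}--\ref{26}), while solving the flux identity for $w_r(r)=\frac{u_r}{\chi(u+1)}-\frac{(G_u)_r}{\sqrt{\chi(u+1)}}$ and integrating by parts regenerates the entropy $\frac1\chi\int_{B_{r_0}}\log(u+1)(\alpha u+\beta v)\,dx$, and leaves a remainder $\int_0^{r_0}\frac{(G_u)_r}{\sqrt{\chi(u+1)}}m(r)\,dr$ whose Cauchy--Schwarz partner $\bigl(\int_0^{r_0}\frac{m(r)^2r^{1-N}}{\chi(u+1)}\,dr\bigr)^{1/2}$ cannot be bounded by $CK$ (the crude bound $m\leqslant CK$ meets the non-integrable weight $r^{1-N}$, and the refined bound $m(r)\lesssim r^N$ costs a power of $\Vert u\Vert_\infty$). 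So the claimed exchange of the local mass for $\sqrt K(\Vert G_u\Vert_2+\Vert G_v\Vert_2)+CK$ is asserted, not proved.

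The paper sidesteps this by never integrating the equation first. It multiplies the second-order radial equation by $2r^{2N-2}w_r$, i.e.\ computes $\partial_r\bigl((r^{N-1}w_r)^2\bigr)$, so that $u$ and $w_r$ meet at the same radius and the substitution $\chi uw_r=u_r-\chi w_r-\sqrt{\chi(u+1)}\,g_1$ applies pointwise. The resulting $-\frac{2\alpha}{\chi}\rho^{2N-2}u_\rho$ is integrated by parts in $\rho$: the boundary term $-\frac{2\alpha}{\chi}r^{2N-2}u(r)$ has a good sign, and the bulk term carries the extra weight $\rho^{2N-3}=\rho^{N-2}\cdot\rho^{N-1}$, which for $\rho\leqslant 2$ reduces it to a pure mass term $CK$ after the final Fubini step. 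The quadratic $w_r^2$ produced by Young's inequality on the $fw_r$ product is absorbed by a Gronwall argument in $r$, and the splitting $\rho^{2N-2}=\rho^{N-1}\cdot\rho^{N-1}\leqslant r^{N-1}\rho^{N-1}$ is what supplies the measure $\rho^{N-1}d\rho$ needed to pair $\sqrt{\chi(u+1)}$ against $g_1$ in $L^2$ and produce $\sqrt K\,\Vert g_1\Vert_2$. Some device of this kind---keeping $u$ and $w_r$ at the same radius and manufacturing the $\rho^{N-1}$ weight before Cauchy--Schwarz---is what your argument is missing.
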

\begin{proof}
    Let
    \begin{align*}
        f & \equiv f(r,t):=-\Delta w +\lambda w -\alpha u -\beta v, \\
        g_{1} & \equiv g_{1}(r,t):= \frac{u_{r}}{\sqrt{\chi(u+1)}}-\sqrt{\chi(u+1)}w_{r}, \\
        g_{2} & \equiv g_{2}(r,t):=\frac{v_{r}}{\sqrt{\xi(v+1)}}-\sqrt{\xi(v+1)}w_{r}.
    \end{align*}
    Multiplying both sides of the following equation  by $2r^{2N-2}w_{r}$,
    \begin{equation*}
        r^{1-N}(r^{N-1}w_{r})_{r} = -\alpha u -\beta v +\lambda w -f,
    \end{equation*}
    we have
    \begin{equation*}
        \partial_{r}(r^{2N-2} w_{r}^{2})=-2\alpha r^{2N-2}uw_{r}-2\beta r^{2N-2}vw_{r}+2\lambda r^{2N-2}w w_{r} -2r^{2N-2}fw_{r}.
    \end{equation*}
    By using the fact that
    \begin{align*}
        \chi u w_{r} &= u_{r}-\chi w_{r} -\sqrt{\chi(u+1)}g_{1}, \\
        \xi  v w_{r} &= v_{r}-\xi w_{r}-\sqrt{\xi(v+1)}g_{2},
    \end{align*}
    and  Young's inequality, we have
    \begin{equation}\label{33}
    \begin{aligned}
        \partial_{r}(r^{2N-2} w_{r}^{2})&=-\frac{2\alpha}{\chi}r^{2N-2}u_{r}+2\alpha r^{2N-2}w_{r}+\frac{2\alpha}{\chi}r^{2N-2}\sqrt{\chi(u+1)}g_{1} \\
        & \quad -\frac{2\beta}{\xi}r^{2N-2}v_{r}+2\beta r^{2N-2}w_{r}+\frac{2\beta}{\xi}r^{2N-2}\sqrt{\xi(v+1)}g_{2} \\
        & \quad +2\lambda r^{2N-2}ww_{r}-2r^{2N-2}fw_{r} \\
        &  \leqslant - \frac{2\alpha}{\chi}r^{2N-2}u_{r}+2\alpha r^{2N-2}w_{r}+\frac{2\alpha}{\chi}r^{2N-2}\sqrt{\chi(u+1)}g_{1} \\
        & \quad -\frac{2\beta}{\xi}r^{2N-2}v_{r}+2\beta r^{2N-2}w_{r}+\frac{2\beta}{\xi}r^{2N-2}\sqrt{\xi(v+1)}g_{2} \\
        & \quad +2\lambda r^{2N-2}ww_{r}+(N-1)r^{2N-2}w_{r}^{2}+\frac{1}{N-1}r^{2N-2}f^{2}.
    \end{aligned}
    \end{equation}
    Thus by the Gronwall inequality, we get
    \begin{equation}\label{34}
    \begin{aligned}
        r^{2N-2}w_{r}^{2}(r,t) & \leqslant \int_{0}^{r}e^{(N-1)(r-\rho)} \big[- \frac{2\alpha}{\chi}\rho^{2N-2}u_{r}(\rho,t)+2\alpha \rho^{2N-2}w_{r}(\rho,t) \\
        & \quad -\frac{2\beta}{\xi}\rho^{2N-2}v_{r}(\rho,t)+2\beta \rho^{2N-2}w_{r}(\rho,t) \\
        & \quad +\frac{2\alpha}{\chi}\rho^{2N-2}\sqrt{\chi(u(\rho,t)+1)}g_{1}(\rho,t) +\frac{2\beta}{\xi}\rho^{2N-2}\sqrt{\xi(v(\rho,t)+1)}g_{2}(\rho,t) \\
        & \quad +2\lambda \rho^{2N-2}w(\rho,t)w_{r}(\rho,t)+\frac{1}{N-1}\rho^{2N-2}f^{2}(\rho,t)\big]  d\rho.
    \end{aligned}
    \end{equation}
    By integration by parts, we have
    \begin{equation}\label{35}
    \begin{aligned}
        &\int_{0}^{r}e^{(N-1)(r-\rho)} \big[- \frac{2\alpha}{\chi}\rho^{2N-2}u_{r}(\rho,t)+2\alpha \rho^{2N-2}w_{r}(\rho,t) \\
        & \quad -\frac{2\beta}{\xi}\rho^{2N-2}v_{r}(\rho,t)+2\beta \rho^{2N-2}w_{r}(\rho,t)
          +2\lambda \rho^{2N-2}w(\rho,t)w_{r}(\rho,t)\big] d\rho \\
         &= \big[e^{(N-1)(r-\rho)}(-\frac{2\alpha}{\chi}\rho^{2N-2}u(\rho,t)+2\alpha \rho^{2N-2}w(\rho,t) -\frac{2\beta}{\xi}\rho^{2N-2}v(\rho,t)\\
         & \quad +2\beta \rho^{2N-2}w(\rho,t)+2\lambda \rho^{2N-2}w^{2}(\rho,t)) \big]_{0}^{r}+(N-1)\int_{0}^{r}e^{(N-1)(r-\rho)}\rho^{2N-3}(2-\rho) \\
         & \quad  \cdot \left[\frac{2\alpha}{\chi}u(\rho,t)-2\alpha w(\rho,t)+\frac{2\beta}{\xi}v(\rho,t)-2\beta w(\rho,t)-2\lambda \rho^{2N-2}w^{2}(\rho,t) \right]d\rho \\
         & \leqslant 2\alpha r^{2N-2}w(r,t)+2\beta r^{2N-2}w(r,t)+2\lambda r^{2N-2}w^{2}(r,t) \\
         & \quad +4(N-1)\int_{0}^{r}e^{(N-1)(r-\rho)}\rho^{2N-3}\left(\frac{2\alpha}{\chi}u(\rho,t)+\frac{2\beta}{\xi}v(\rho,t)\right) d\rho \\
         & \leqslant 2\alpha r^{2N-2}w(r,t)+2\beta r^{2N-2}w(r,t)+2\lambda r^{2N-2}w^{2}(r,t) \\
         & \quad + 4(N-1)e^{2(N-1)}\int_{0}^{r}\rho^{2N-3} \left[ \frac{2\alpha}{\chi}u(\rho,t)+\frac{2\beta}{\xi}v(\rho,t)\right] d\rho,
    \end{aligned}
    \end{equation}
    for $r \in (0,2)$.
    In addition, using H\"older inequality, we obtain that
    \begin{equation}\label{36}
    \begin{aligned}
        \int_{0}^{r}& e^{(N-1)(r-\rho)}\left( \frac{2\alpha}{\chi}\rho^{2N-2}\sqrt{\chi(u(\rho,t)+1)}g_{1}(\rho,t)\right) d\rho  \\
        & \leqslant \frac{2\alpha}{\chi}r^{N-1}e^{2(N-1)}\int_{0}^{r} \rho^{\frac{N-1}{2}}\sqrt{\chi(u(\rho,t)+1)}g_{1}(\rho,t)\rho^{\frac{N-1}{2}} d\rho \\
        & \leqslant \frac{2\alpha}{\chi}r^{N-1}e^{2(N-1)} \left[ \int_{0}^{r} \chi \rho^{N-1}(u(\rho,t)+1) d\rho \right]^{\frac{1}{2}}\cdot \left[ \int_{0}^{r} \rho^{N-1}g_{1}^{2}(\rho,t) d\rho \right]^{\frac{1}{2}} \\
        & \leqslant  \frac{2\alpha}{\chi}r^{N-1}\sqrt{\chi}\left(\Vert u \Vert_{1}+\frac{2^{N}}{N}\right)^{\frac{1}{2}}\cdot \left(\int_{0}^{\infty}\rho^{N-1}g_{1}^{2}(\rho,t) d\rho \right)^{\frac{1}{2}} \\
        & \leqslant  \frac{2\alpha}{\chi}r^{N-1}C(N)\sqrt{\chi}K^{\frac{1}{2}}\left(\int_{0}^{\infty}\rho^{N-1}g_{1}^{2}(\rho,t) d\rho \right)^{\frac{1}{2}},
    \end{aligned}
    \end{equation}
    for $r \in (0,2)$.
    By the same argument as (\ref{36}), we have
    \begin{equation}\label{37}
         \int_{0}^{r} e^{(N-1)(r-\rho)}\left( \frac{2\beta}{\xi}\rho^{2N-2}\sqrt{\xi(v(\rho,t)+1)}g_{2}(\rho,t)\right) d\rho  \leqslant  \frac{2\beta}{\xi}r^{N-1}\sqrt{\xi}C(N)K^{\frac{1}{2}}\left(\int_{0}^{\infty}\rho^{N-1}g_{2}^{2}(\rho,t) d\rho \right)^{\frac{1}{2}}
    \end{equation}
    and
    \begin{equation}\label{38}
        \frac{1}{N-1}\int_{0}^{r}e^{(N-1)(r-\rho)}\rho^{2N-2}f^{2}(\rho,t) d\rho \leqslant \frac{1}{N-1}e^{2(N-1)}r^{N-1}\int_{0}^{r}\rho^{N-1}f^{2}(\rho,t)d\rho,
    \end{equation}
     for $r \in (0,2)$. Inserting (\ref{35}) - (\ref{38}) to (\ref{34}), we get
     \begin{equation}\label{39}
     \begin{aligned}
         r^{2N-2}w_{r}^{2}(r,t) & \leqslant 2\alpha r^{2N-2}w(r,t)+2\beta r^{2N-2}w(r,t)+2\lambda r^{2N-2}w^{2}(r,t) \\
         & \quad + 4(N-1)e^{2(N-1)}\int_{0}^{r}\rho^{2N-3} \left[ \frac{2\alpha}{\chi}u(\rho,t)+\frac{2\beta}{\xi}v(\rho,t)\right] d\rho  \\
         & \quad +  \frac{2\alpha}{\sqrt{\chi}}r^{N-1}C(N)K^{\frac{1}{2}}\left(\int_{0}^{\infty}\rho^{N-1}g_{1}^{2}(\rho,t) d\rho \right)^{\frac{1}{2}} \\
         & \quad +  \frac{2\beta}{\sqrt{\xi}}r^{N-1}C(N)K^{\frac{1}{2}}\left(\int_{0}^{\infty}\rho^{N-1}g_{2}^{2}(\rho,t) d\rho \right)^{\frac{1}{2}} \\
         & \quad + \frac{1}{N-1}e^{2(N-1)}r^{N-1}\int_{0}^{\infty}\rho^{N-1}f^{2}(\rho,t)d\rho.
     \end{aligned}
     \end{equation}
     Multiplying (\ref{39}) by $r^{1-N}$ and integrating from $0$ to $r_0$ with $r_{0} \in (0,2)$, we have
     \begin{equation}\label{40}
     \begin{aligned}
         &\int_{0}^{r_{0}}\rho^{N-1}w_{r}^{2}(\rho,t) d\rho \\
         & \leqslant 2\alpha \int_{0}^{r_{0}}\rho^{N-1}w(\rho,t)d\rho+2\beta \int_{0}^{r_{0}}\rho^{N-1}w(\rho,t)d\rho+2\lambda \int_{0}^{r_{0}}\rho^{N-1}w^{2}(\rho,t)d\rho \\
         & \quad + 4(N-1)e^{2(N-1)}\int_{0}^{r_{0}}r^{1-N}\int_{0}^{r}\rho^{2N-3} \left[ \frac{2\alpha}{\chi}u(\rho,t)+\frac{2\beta}{\xi}v(\rho,t)\right] d\rho dr  \\
         & \quad +  \frac{2\alpha}{\sqrt{\chi}}C(N)K^{\frac{1}{2}}r_{0}\left(\int_{0}^{\infty}\rho^{N-1}g_{1}^{2}(\rho,t) d\rho \right)^{\frac{1}{2}} \\
         & \quad +  \frac{2\beta}{\sqrt{\xi}}C(N)K^{\frac{1}{2}}r_{0}\left(\int_{0}^{\infty}\rho^{N-1}g_{2}^{2}(\rho,t) d\rho \right)^{\frac{1}{2}} \\
         & \quad + \frac{1}{N-1}e^{2(N-1)}r_{0}\int_{0}^{\infty}\rho^{N-1}f^{2}(\rho,t)d\rho.
     \end{aligned}
     \end{equation}
     By Fubini's theorem, we obtain that
     \begin{equation}\label{41}
     \begin{aligned}
        4&(N-1)e^{2(N-1)}\int_{0}^{r_{0}}r^{1-N}\int_{0}^{r}\rho^{2N-3} \left[ \frac{2\alpha}{\chi}u(\rho,t)+\frac{2\beta}{\xi}v(\rho,t)\right] d\rho dr  \\
        & = 4(N-1)e^{2(N-1)}\int_{0}^{r_{0}}\int_{\rho}^{r_{0}}r^{1-N}\rho^{2N-3} \left[ \frac{2\alpha}{\chi}u(\rho,t)+\frac{2\beta}{\xi}v(\rho,t)\right] dr d\rho  \\
        & = \frac{4(N-1)e^{2(N-1)}}{N-2}\int_{0}^{r_{0}}(\rho^{2-N}-r_{0}^{2-N})\rho^{2N-3} \left[ \frac{2\alpha}{\chi}u(\rho,t)+\frac{2\beta}{\xi}v(\rho,t)\right] d\rho \\
        & \leqslant \frac{4(N-1)e^{2(N-1)}}{N-2} \int_{0}^{r_{0}} \rho^{N-1} \left[ \frac{2\alpha}{\chi}u(\rho,t)+\frac{2\beta}{\xi}v(\rho,t)\right] d\rho \\
        & \leqslant C(\alpha,\beta,\chi,\xi,N)K.
     \end{aligned}
     \end{equation}
     Thus the proof is completed.
\end{proof}

\begin{lemma}\label{42}
    For $\theta  \in \left(\frac{1}{2},1\right)$, the classical radially symmetry solution of (\ref{1}) satisfies
    \begin{equation*}
        \frac{1}{2}\int_{\mathbb{R}^{N}}(\alpha u +\beta v)w dx \leqslant CK^{2}(\Vert f \Vert_{2}^{2\theta}+\Vert g_{1} \Vert_{2} +\Vert g_{2} \Vert_{2}+1), \ \text{for} \ t \in (0,T_{max}),
    \end{equation*}
    where C is a for some constant depending on $\lambda,\alpha,\beta,\chi,\xi,N$ and
    \begin{align*}
        f &=-\Delta w +\lambda w -\alpha u -\beta v, \\
        g_{1} & = \frac{\nabla u}{\sqrt{\chi(u+1)}}-\sqrt{\chi(u+1)}\nabla w, \\
        g_{2} & =\frac{\nabla v}{\sqrt{\xi(v+1)}}-\sqrt{\xi(v+1)}\nabla w.
    \end{align*}
\end{lemma}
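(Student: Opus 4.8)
The plan is to merge the three spatial estimates \ref{24}, \ref{26}, \ref{32} into a single inequality for $J:=\int_{\mathbb{R}^N}(\alpha u+\beta v)w\,dx$ and then to optimize over the free radius $r_0$. The first step is bookkeeping: the third equation of (\ref{1}) gives $w_t=\Delta w-\lambda w+\alpha u+\beta v=-f$, so $\Vert w_t\Vert_2=\Vert f\Vert_2$ and the quantity $\Vert\Delta w-\lambda w+\alpha u+\beta v\Vert_2$ appearing in \ref{24} and \ref{32} is exactly $\Vert f\Vert_2$. Thus all three lemmas become estimates in the quantities $\Vert f\Vert_2,\Vert g_1\Vert_2,\Vert g_2\Vert_2$.

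Next I would dispose of the leftover boundary term $2\lambda\int_{B_2}w^2\,dx$ in \ref{32}. Writing $B_2=B_1\cup(B_2\setminus B_1)$, on the annulus one has $w\le CK$ by Lemma \ref{70}, so $\int_{B_2\setminus B_1}w^2\,dx\le CK^2$; on $B_1$ I would run the Gagliardo--Nirenberg/Young argument used for $I_3$ in \ref{24} with the cut-off $\zeta_1$ (whose gradient is supported in $B_2\setminus B_1$, where again $w\le CK$, and $\Vert\zeta_1 w\Vert_1\le\Vert w\Vert_1\le CK$). This yields $\int_{B_2}w^2\,dx\le\delta\int_{B_2}|\nabla w|^2\,dx+C(\delta)K^2$ for any $\delta>0$, and choosing $\delta$ small absorbs the term into the coefficient of $\int_{B_2}|\nabla w|^2\,dx$.

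Adding \ref{26} and \ref{32} now bounds $\int_{B_2}|\nabla w|^2\,dx$ via the splitting into $B_{r_0}$ and $B_2\setminus B_{r_0}$; substituting this into the leading term $3\int_{B_2}|\nabla w|^2\,dx$ of \ref{24} and absorbing the $3\epsilon J$ coming from the $\epsilon\int_{\mathbb{R}^N}(\alpha u+\beta v)w\,dx$ of \ref{26} (take $\epsilon=\tfrac16$) is exactly what produces the factor $\tfrac12$ in front of $J$. After collecting constants one is left with
\[
\tfrac{1}{2} J \leq C\Big[ K^2 + \sqrt{K}\big(\Vert g_1\Vert_2 + \Vert g_2\Vert_2\big) + r_0\Vert f\Vert_2^2 + K r_0^{\frac{1-N}{2}}\Vert f\Vert_2 + K^2 r_0^{1-N} + K^{\frac{4}{N+4}}\Vert f\Vert_2^{\frac{2N+4}{N+4}} \Big].
\]

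Finally I would choose $r_0\in(0,2)$ as a negative power of $\Vert f\Vert_2$ that balances $r_0\Vert f\Vert_2^2$ against the mixed term $Kr_0^{\frac{1-N}{2}}\Vert f\Vert_2$; this drives the dominant power of $\Vert f\Vert_2$ down to $2\theta$, while a short computation shows that the residual exponents coming from $K^2r_0^{1-N}$ and from $K^{\frac{4}{N+4}}\Vert f\Vert_2^{\frac{2N+4}{N+4}}$ stay below $2\theta$, so that Young's inequality (together with $\sqrt K\le C(1+K^2)$) gathers everything into $CK^2(\Vert f\Vert_2^{2\theta}+\Vert g_1\Vert_2+\Vert g_2\Vert_2+1)$; when the balancing $r_0$ would exceed $2$, i.e. $\Vert f\Vert_2$ is bounded, one simply fixes $r_0$ and every term is $\le CK^2$. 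I expect this last step to be the main obstacle: it is pure exponent bookkeeping, but one must simultaneously keep the various powers of $K$ collapsing to $K^2$ and verify that the chosen $r_0$ really lies in $(0,2)$, which is what ultimately fixes the admissible range of $\theta$.
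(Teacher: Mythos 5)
Your proposal is correct and follows essentially the same route as the paper: combine Lemmas \ref{24}, \ref{26}, \ref{32} (with $w_t=-f$), absorb $\int_{B_2}w^2$ via a Gagliardo--Nirenberg/Ehrling step, and then choose $r_0$ as a negative power of $\Vert f\Vert_2$ (capped when $\Vert f\Vert_2$ is small) before applying Young's inequality. The only cosmetic difference is that the paper takes $r_0=\min\{1,\Vert f\Vert_2^{-\alpha}\}$ with a free exponent $\alpha\in\left(0,\tfrac{2}{N-1}\right)$ and sets $\theta$ to the maximum of the resulting exponents, whereas you balance two specific terms to pin down one particular $\theta$.
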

\begin{proof}
Let $\alpha \in \left(0,\frac{2}{N-1}\right)$ and $r_{0} \equiv r_{0}(t):= \min \{1,\Vert f \Vert_{2}^{-\alpha}\}$. By Lemma \ref{26} and \ref{32} with taking $\epsilon=\frac{1}{12}$, we have
\begin{equation}\label{43}
\begin{aligned}
    \int_{B_{2}} \vert \nabla w \vert^{2} dx & \leqslant \frac{1}{12}\int_{\mathbb{R}^{N}}(\alpha u+ \beta v)w dx+CK^{2}r_{0}^{1-N} + CKr_{0}^{\frac{1-N}{2}}\Vert f \Vert_{2} \\
    & \quad + CK + Cr_{0}\Vert f \Vert_{2}^{2} + C\sqrt{K} \Vert g_{1} \Vert_{2} + C\sqrt{K}\Vert g_{2} \Vert_{2}+\int_{B_{2}} w^{2}dx,
\end{aligned}
\end{equation}
where $C$ is a for some positive constant. Using the property that $W^{1,2}(B_{2})$ is compactly imbedding to $L^{2}(B_{2})$ and $L^{2}(B_{2})$ is continuously imbedding to $L^{1}(B_{2})$ to apply Ehrling-type Lemma, we can get
\begin{equation}\label{44}
    \int_{B_{2}} w^{2} dx \leqslant \frac{1}{2} \int_{B_{2}}\vert \nabla w \vert^{2} dx + C\left( \int_{B_{2}} w dx \right)^{2}\leqslant \frac{1}{2}\int_{B_{2}}\vert \nabla w \vert^{2} dx +CK^{2},
\end{equation}
where $C$ is a for some positive constant.
Inserting (\ref{44}) into the (\ref{43}), we have
\begin{align}\label{45}
    \frac{1}{2}\int_{B_{2}} \vert \nabla w \vert^{2} dx & \leqslant \frac{1}{12}\int_{\mathbb{R}^{N}}(\alpha u+ \beta v)w dx+CK^{2}r_{0}^{1-N} + CKr_{0}^{\frac{1-N}{2}}\Vert f \Vert_{2} \\ \nonumber
    & \quad + CK + Cr_{0}\Vert f \Vert_{2}^{2} + C\sqrt{K} \Vert g_{1} \Vert_{2} + C\sqrt{K}\Vert g_{2} \Vert_{2}+CK^{2}.
\end{align}
By Lemma \ref{24}, we have
\begin{equation}\label{46}
    \int_{\mathbb{R}^{N}}(\alpha u +\beta v) w dx \leqslant 3 \int_{B_{2}}\vert \nabla w \vert^{2} dx +CK^{2}+CK^{\frac{4}{N+4}}\Vert f \Vert_{2}^{\frac{2N+4}{N+4}}.
\end{equation}
Inserting (\ref{45}) into (\ref{46}), we get
\begin{equation}\label{47}
\begin{aligned}
    \frac{1}{2}\int_{\mathbb{R}^{N}}(\alpha u +\beta v)w dx & \leqslant 6CK^{2}r_{0}^{1-N}+6CK+6CK^{2}+CK^{2}\\
    & \quad +6CKr_{0}^{\frac{1-N}{2}}\Vert f \Vert_{2}+6Cr_{0}\Vert f \Vert_{2}^{2}+CK^{\frac{4}{N+4}}\Vert f \Vert_{2}^{\frac{2N+4}{N+4}} \\
    & \quad + 6C\sqrt{K}\Vert g_{1} \Vert_{2} +6C\sqrt{K}\Vert g_{2} \Vert_{2}.
\end{aligned}
\end{equation}
If $\Vert f \Vert_{2}>1$ for $t \in (0,T_{max})$, then $r_{0}=\Vert f \Vert_{2}^{-\alpha}$. So we obtain
\begin{equation}\label{48}
\begin{aligned}
    \frac{1}{2}\int_{\mathbb{R}^{N}}(\alpha u +\beta v)w dx & \leqslant 6CK^{2}\Vert f \Vert_{2}^{\alpha(N-1)}+6CK+6CK^{2}+CK^{2}\\
    & \quad +6CK\Vert f \Vert_{2}^{1+\frac{\alpha(N-1)}{2}}+6C\Vert f \Vert_{2}^{2-\alpha}+CK^{\frac{4}{N+4}}\Vert f \Vert_{2}^{\frac{2N+4}{N+4}} \\
    & \quad + 6C\sqrt{K}\Vert g_{1} \Vert_{2} +6C\sqrt{K}\Vert g_{2} \Vert_{2}.
\end{aligned}
\end{equation}
Set $\theta:=max\{\frac{\alpha(N-1)}{2},\frac{1}{2}+\frac{\alpha(N-1)}{4},\frac{2-\alpha}{2},\frac{N+2}{N+4}\}$.
Since $\alpha \in \left(0,\frac{2}{N-1}\right)$, $\theta \in \left(\frac{1}{2},1\right)$. Thus by applying Young's inequality to the term $\Vert f \Vert_{2}$ of (\ref{48}) and using the fact $K>1$, we have
\begin{equation}\label{49}
     \frac{1}{2}\int_{\mathbb{R}^{N}}(\alpha u +\beta v)w dx  \leqslant CK^{2}(\Vert f \Vert_{2}^{2\theta}+1+\Vert g_{1}\Vert_{2}+\Vert g_{2} \Vert_{2}),
\end{equation}
where $C$ is a for some positive constant.\\
If $\Vert f \Vert_{2} \leqslant 1$, then $r_{0}(t)=1$.
Thus by (\ref{47}), we get
\begin{align*}
    \frac{1}{2}\int_{\mathbb{R}^{N}}(\alpha u +\beta v)w dx & \leqslant 6CK^{2}+6CK+6CK^{2}+CK^{2}\\
    & \quad +6CK\Vert f \Vert_{2}+6C\Vert f \Vert_{2}^{2}+CK^{\frac{4}{N+4}}\Vert f \Vert_{2}^{\frac{2N+4}{N+4}} \\
    & \quad + 6C\sqrt{K}\Vert g_{1} \Vert_{2} +6C\sqrt{K}\Vert g_{2} \Vert_{2}.
\end{align*}
By the same argument as (\ref{49}), we get
\begin{equation*}
     \frac{1}{2}\int_{\mathbb{R}^{N}}(\alpha u +\beta v)w dx \leqslant CK^{2}(\Vert f \Vert_{2}^{2\theta}+1+\Vert g_{1}\Vert_{2}+\Vert g_{2} \Vert_{2}).
\end{equation*}
This completes the proof.

\end{proof}

Next, we will introduce the Gronwall type inequality.

\begin{lemma}[\cite{winkler2}] \label{57}
Suppose that $a>0 ,b>0$, and $\kappa>1$ and that for some $T>0$, an nonnegative function $y \in C^{0}([0,T])$ satisfies
\begin{equation*}
    y(t) \geqslant a + b \int_{0}^{t} y^{\kappa}(s) ds \quad \text{for all} \quad t\in (0,T).
\end{equation*}
Then
\begin{equation*}
    T \leqslant \frac{1}{(\kappa-1)a^{\kappa-1}b}.
\end{equation*}

\end{lemma}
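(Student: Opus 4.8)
The plan is to reduce the integral inequality to a differential inequality for an auxiliary function and then compare with the explicitly solvable ODE $z'=bz^{\kappa}$, whose solutions are known to blow up in finite time. First I would set
\[
z(t):=a+b\int_{0}^{t}y^{\kappa}(s)\,ds,\qquad t\in[0,T],
\]
so that $z(0)=a>0$ and, since $y$ is continuous, $z\in C^{1}([0,T])$ with $z'(t)=b\,y^{\kappa}(t)$. The hypothesis is precisely $y(t)\geqslant z(t)$, and because $y\geqslant 0$ and $\kappa>1>0$ the power function $s\mapsto s^{\kappa}$ is nondecreasing on $[0,\infty)$, whence $y^{\kappa}(t)\geqslant z^{\kappa}(t)$. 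Combining these,
\[
z'(t)\geqslant b\,z^{\kappa}(t),\qquad t\in(0,T).
\]

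Next, observing that $z(t)\geqslant a>0$ throughout, I would divide by $z^{\kappa}(t)$ and recognize the resulting left-hand side as a total derivative: since $\frac{d}{dt}z^{1-\kappa}=(1-\kappa)z^{-\kappa}z'$, the inequality above is equivalent to
\[
\frac{d}{dt}\,z^{1-\kappa}(t)\leqslant-(\kappa-1)\,b,\qquad t\in(0,T).
\]
Integrating from $0$ to $t$ and using $z(0)=a$ gives
\[
z^{1-\kappa}(t)\leqslant a^{1-\kappa}-(\kappa-1)\,b\,t,\qquad t\in(0,T).
\]

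Finally, because $\kappa>1$ we have $1-\kappa<0$, so the left-hand side $z^{1-\kappa}(t)$ is strictly positive for every $t$ (as $z(t)>0$). Therefore the right-hand side must remain positive on $(0,T)$, that is, $a^{1-\kappa}-(\kappa-1)\,b\,t>0$ for all $t\in(0,T)$, which forces $t<\frac{a^{1-\kappa}}{(\kappa-1)b}=\frac{1}{(\kappa-1)a^{\kappa-1}b}$; letting $t\uparrow T$ yields the claimed bound $T\leqslant\frac{1}{(\kappa-1)a^{\kappa-1}b}$. I do not expect a genuine obstacle here: the only points meriting a word of care are the monotonicity step $y^{\kappa}\geqslant z^{\kappa}$ (using $y\geqslant 0$ and $\kappa>0$) and the strict positivity of $z$ guaranteed by $a>0$, which legitimizes the division by $z^{\kappa}$ and the sign conclusion. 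The substance of the argument is simply the finite-time blow-up of the comparison equation $z'=bz^{\kappa}$.
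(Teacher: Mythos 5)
Your proof is correct. Note that the paper itself offers no proof of this lemma --- it is quoted verbatim from \cite{winkler2} --- so there is nothing internal to compare against; your argument (setting $z(t)=a+b\int_0^t y^{\kappa}$, deducing $z'\geqslant bz^{\kappa}$ from $y\geqslant z\geqslant a>0$ and the monotonicity of $s\mapsto s^{\kappa}$, and integrating $\frac{d}{dt}z^{1-\kappa}\leqslant -(\kappa-1)b$) is precisely the standard ODE-comparison derivation that underlies the cited result, and every step, including the positivity of $z^{1-\kappa}$ forcing $t<\frac{1}{(\kappa-1)a^{\kappa-1}b}$, is sound.
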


We now prove Theorem 1.1 by using the previous Lemmas.

\textit{\textbf{Proof of Theorem 1.1}}

We will briefly write the energy function such as $\mathcal{F}(u,v,w)(t):=\mathcal{F}(t), \mathcal{F}(u_{0},v_{0},w_{0}):=\mathcal{F}(0)$. Due to $ \theta \in \left(\frac{1}{2},1\right)$, Lemma \ref{42} and Young's inequality, we have  for $t \in (0,T_{max})$,
 \begin{equation}\label{50}
    \int_{\mathbb{R}^{N}} (\alpha u + \beta v)w dx \leqslant C_{1}K^{2}(D^{\theta}(t)+1) \leqslant \frac{1}{2(\alpha \chi+\beta \xi)}D(t)+ C_{2}K^{\frac{2}{1-\theta}},
 \end{equation}
 where $C_{1},C_{2}$ are for some positive constant depending on $\lambda,\alpha,\beta,\chi,\xi,N,\theta$. \\
 Assume that $\mathcal{F}(0) \leqslant -C_{3}K^{\frac{2}{1-\theta}}$ where $C_{3}$ satisfies
 \begin{equation}\label{51}
    C_{3} > \max\Bigl\{2C_{1}+(\alpha \chi+\beta \xi)C_{2}, (\alpha \chi+\beta \xi)C_{2}+ \left(\frac{2\theta}{1-\theta}2^{\frac{\theta+1}{\theta}}C_{1}^{\frac{1}{\theta}}\right)^{\frac{\theta}{1-\theta}}\Bigr\}.
 \end{equation}
In order to prove that $T_{max} \leqslant 1$, we will argue by contradiction. If it was false, that is, $T_{max}>1$ and let
 \begin{equation*}\label{52}
    t_{0}:= \sup\{ \tilde{t} \in (0,1) \vert \mathcal{F}(t) < -2C_{1}K^{2} \quad \text{for} \quad \forall t \in (0,\tilde{t}) \}.
 \end{equation*}
 $t_0$ is well defined due to $\frac{2}{1-\theta} >2, K>1$ and continuity of $\mathcal{F}(t)$ so we deduce that
 \begin{equation*}
    \mathcal{F}(0) <-(2C_{1}+(\alpha \chi+\beta \xi)C_{2})K^{\frac{2}{1-\theta}} < -2C_{1}K^{\frac{2}{1-\theta}}  <-2C_{1}K^{2}.
 \end{equation*}
By definition of $t_{0}$, $t_{0}=1$. Indeed, if $t_{0}<1$, then we have
\begin{equation}\label{53}
    \mathcal{F}(t) < -2C_{1}K^{2} \quad \text{for all} \quad t \in(0,t_{0}), \quad \text{and} \quad \mathcal{F}(t_{0})=-2C_{1}K^{2}.
\end{equation}
Since $T_{max}>1$ , Lemma \ref{11} and (\ref{50}), we can get
\begin{equation}\label{54}
\begin{aligned}
    \mathcal{F}(t) & \leqslant -\int_{0}^{t}D(s)ds +\mathcal{F}(0) + (\alpha \chi +\beta \xi) \int_{0}^{t}\mathcal{F}(s)ds \\
    & \quad + (\alpha \chi + \beta \xi) \int_{0}^{t}\int_{\mathbb{R}^{N}}(\alpha u +\beta v) w dx ds \\
    & \leqslant -\frac{1}{2}\int_{0}^{t}D(s)ds+\mathcal{F}(0)+(\alpha \chi +\beta \xi)C_{2}K^{\frac{2}{1-\theta}},
\end{aligned}
\end{equation}
for $t \in (0,t_{0}]$.
Thus by (\ref{53}) and(\ref{51}), we obtain
\begin{equation*}
    -2C_{1}K^{2}=\mathcal{F}(t_{0})  \leqslant \mathcal{F}(0)+(\alpha \chi+\beta \xi) C_{2}K^{\frac{2}{1-\theta}}  < (-C_{3}+(\alpha \chi +\beta \xi)C_{2})K^{\frac{2}{1-\theta}} < -2C_{1}K^{\frac{2}{1-\theta}} <-2C_{1}K^{2}.
\end{equation*}
This is a contradiction. Thus $t_{0}=1$. \\
On the other hand, from (\ref{50}), we can get
\begin{equation*}
    \mathcal{F}(t) \geqslant -\int_{\mathbb{R}^{N}}(\alpha u +\beta v)w dx \geqslant -C_{1}K^{2}(D^{\theta}(t)+1)
\end{equation*}
for $t \in (0,t_{0})$. Then we have by (\ref{53}),
\begin{equation}\label{55}
    D(t) \geqslant \left(\frac{-\mathcal{F}(t)}{C_{1}K^{2}}-1\right)^{\frac{1}{\theta}} \geqslant \left(\frac{-\mathcal{F}(t)}{2C_{1}K^{2}}\right)^{\frac{1}{\theta}}
\end{equation}
for $t\in(0,t_{0})$.\\
According to (\ref{54}),(\ref{55}) and (\ref{51}), $\mathcal{F}(t)$ satisfies
\begin{equation}\label{56}
\begin{aligned}
    -\mathcal{F}(t) & \geqslant \frac{1}{2}\int_{0}^{t}\left(\frac{-\mathcal{F}(s)}{2C_{1}K^{2}}\right)^{\frac{1}{\theta}}ds-\mathcal{F}(0)-(\alpha \chi+\beta \xi)C_{2}K^{\frac{2}{1-\theta}} \\
    &= 2^{-\frac{\theta+1}{\theta}}C_{1}^{-\frac{1}{\theta}}K^{-\frac{2}{\theta}}\int_{0}^{t}(-\mathcal{F}(s))^{\frac{1}{\theta}}ds-\mathcal{F}(0)-(\alpha \chi+\beta \xi)C_{2}K^{\frac{2}{1-\theta}} \\
    & \geqslant 2^{-\frac{\theta+1}{\theta}}C_{1}^{-\frac{1}{\theta}}K^{-\frac{2}{\theta}}\int_{0}^{t}(-\mathcal{F}(s))^{\frac{1}{\theta}}ds +(C_{3}-(\alpha \chi+\beta \xi)C_{2})K^{\frac{2}{1-\theta}} \\
    & \geqslant 2^{-\frac{\theta+1}{\theta}}C_{1}^{-\frac{1}{\theta}}K^{-\frac{2}{\theta}}\int_{0}^{t}(-\mathcal{F}(s))^{\frac{1}{\theta}}ds +\left(\frac{2\theta}{1-\theta}2^{\frac{\theta+1}{\theta}}C_{1}^{\frac{1}{\theta}}\right)^{\frac{\theta}{1-\theta}}K^{\frac{2}{1-\theta}}
\end{aligned}
\end{equation}
for $t \in (0,t_{0})$.
Combining (\ref{56}) with Lemma \ref{57}, $t_{0}$ must satisfy the following inequality
\begin{align*}
    t_{0} \leqslant \frac{1}{\left(\frac{1-\theta}{\theta}\right)\left(\left(\frac{2\theta}{1-\theta}2^{\frac{\theta+1}{\theta}}C_{1}^{\frac{1}{\theta}}\right)^{\frac{\theta}{1-\theta}}K^{\frac{2}{1-\theta}}\right)^{\frac{1-\theta}{\theta}}2^{-\frac{\theta+1}{\theta}}C_{1}^{-\frac{1}{\theta}}K^{-\frac{2}{\theta}}}=\frac{1}{2}.
\end{align*}
It is contradiction to $t_{0}=1$, consequently, $T_{max}>1$ is false. This completes the proof.

\begin{flushright}
  $\Box$
\end{flushright}

Finally, we will prove Corollary 1.1.

\textit{\textbf{Proof of Corollary 1.1}}

    Fix $(r_{j})_{j\in\mathbb{N}} \subseteq (0,1)$ such that $r_{j} \rightarrow 0$ \ as \ $j \rightarrow 0$ and put \\
    \begin{equation}\label{58}
        \varphi(\epsilon):=\int_{0}^{1}\rho^{N-1}(\rho^{2}+\epsilon)^{-\frac{N}{2}} d\rho \quad \text{for} \quad \epsilon >0 .
    \end{equation}
    Then we have $\varphi(\epsilon) \rightarrow \infty $  \ as \ $\epsilon \rightarrow 0$, by monotone converge theorem. So we can fix $\eta_{j} \in (0,1)$ small enough such that
    \begin{equation}\label{59}
        r_{j}^{N}\varphi(\frac{\eta_{j}}{r_{j}^{2}}) \geqslant j \quad \text{for all} \quad j\in\mathbb{N}.
    \end{equation}
    Next, pick any $\kappa \in \left(N-\frac{N}{p},\frac{N-2}{2}\right)$ and let
    $$
    u_{0j}=
    \begin{cases}
        a_{j}(r^{2}+\eta_{j})^{-\frac{N-\kappa}{2}}, & \mbox{if } r\in [0,r_{j}],\\
        u_{0}(r), & \mbox{if } r>r_{j},
    \end{cases}
    $$
    $$
    v_{0j}=
    \begin{cases}
        b_{j}(r^{2}+\eta_{j})^{-\frac{N-\kappa}{2}}, & \mbox{if } r\in [0,r_{j}],\\
        v_{0}(r), & \mbox{if } r>r_{j},
    \end{cases}
    $$
    $$
    w_{0j}=
    \begin{cases}
        c_{j}(r^{2}+\eta_{j})^{-\frac{\kappa}{2}}, & \mbox{if } r\in [0,r_{j}],\\
        w_{0}(r), & \mbox{if } r>r_{j},
    \end{cases}
    $$
    where $a_{j}:=(r_{j}^{2}+\eta_{j})^{\frac{N-\kappa}{2}}u_{0}(r_{j}) , \  b_{j}:=(r_{j}^{2}+\eta_{j})^{\frac{N-\kappa}{2}}v_{0}(r_{j}) $ and $ c_{j}:=(r_{j}^{2}+\eta_{j})^{\frac{\kappa}{2}}w_{0}(r_{j}).$ Then for $p \in \left[1,\frac{2N}{N+2}\right]$ and by using the fact $\vert r_{j}^{2}+\eta_{j} \vert \leqslant 2$, we obtain
    \begin{equation}\label{60}
    \begin{aligned}
        \Vert u_{0j} \Vert_{L^{p}(B_{r_{j}})}^{p}&=\vert \partial B_{1} \vert  \int_{0}^{r_{j}}r^{N-1}a_{j}^{p}(r^{2}+\eta_{j})^{-\frac{p(N-\kappa)}{2}}dr \\
        & = \vert \partial B_{1} \vert u_{0j}^{p}(r_{j})\int_{0}^{r_{j}}r^{N-1}\left(\frac{r_{j}^{2}+\eta_{j}}{r^{2}+\eta_{j}}\right)^{\frac{p(N-\kappa)}{2}}dr \\
        & \leqslant \vert \partial B_{1} \vert u_{0j}^{p}(r_{j})2^{\frac{p(N-\kappa)}{2}}\int_{0}^{r_{j}}r^{N-1-p(N-\kappa)} dr \\
        & = \frac{\leqslant \vert \partial B_{1} \vert u_{0j}^{p}(r_{j})2^{\frac{p(N-\kappa)}{2}}}{N-p(N-\kappa)}r_{j}^{N-p(N-\kappa)}.
    \end{aligned}
    \end{equation}
    Since $ \kappa >N-\frac{N}{p}$, we have $\Vert u_{0j} \Vert_{L^{p}(B_{r_{j}})} \rightarrow 0 \quad \text{as} \quad j \rightarrow \infty$. Hence
    \begin{equation*}
        \Vert u_{0,j}-u_{0} \Vert_{p} = \Vert u_{0,j}-u_{0} \Vert_{L^{p}(B_{r_{j}})} \leqslant \Vert u_{0j} \Vert_{L^{p}(B_{r_{j}})} +\Vert u_{0}\Vert_{\infty}\vert B_{r_{j}} \vert^{\frac{1}{p}}
    \end{equation*}
    Thus $u_{0j} \rightarrow u_{0} $ in $L^{p}(\mathbb{R}^{N})$ as $j \rightarrow \infty$. Similarly, $ v_{0j} \rightarrow v_{0} $ in $L^{p}(\mathbb{R}^{N})$ as $j \rightarrow \infty$. \\
    For $r \in [0,r_{j}]$, we can get $w'_{0j}(r)=-\kappa c_{j}r(r^{2}+\eta_{j})^{-\frac{\kappa+2}{2}}$. Hence
    \begin{align*}
        \Vert \nabla w_{0j} \Vert_{L^{2}(B_{r_{j}})}^{2} &= \vert \partial B_{1} \vert \kappa^{2} c_{j}^{2} \int_{0}^{r_{j}}r^{N+1}(r^{2}+\eta_{j})^{-(\kappa+2)}dr \\
        & \leqslant \vert \partial B_{1} \vert \kappa^{2} 2^{\kappa}\Vert w_{0}\Vert_{\infty}^{2}\int_{0}^{r_{j}}r^{N+1-2(\kappa+2)}dr \\
        &= \frac{\vert \partial B_{1} \vert \kappa^{2}2^{\kappa}\Vert w_{0}\Vert_{\infty}^{2}}{N+2-2(\kappa+2)}r_{j}^{N+2-2(\kappa+2)}.
    \end{align*}
    Since $\kappa< \frac{N-2}{2}$, we get $N+2-2(\kappa+2)>0$. Thus $\Vert \nabla w_{0j} \Vert_{L^{2}(B_{r_{j}})} \rightarrow 0$ as $ j \rightarrow \infty$.
    Similarly, $\Vert w_{0j} \Vert_{L^{2}(B_{r_{j}})} \rightarrow 0$ as $  j \rightarrow \infty$. Therefore,
    \begin{equation*}
        \Vert w_{0j}-w_{0} \Vert_{H^{1}(\mathbb{R}^{N})}=\Vert w_{0j}-w_{0} \Vert_{H^{1}(B_{r_{j}})}\leqslant \Vert w_{0j} \Vert_{H^{1}(B_{r_{j}})}+C\Vert w_{0} \Vert_{W^{1,\infty}(\mathbb{R}^{N})}\vert B_{r_{j}}\vert^{\frac{1}{2}},
    \end{equation*}
    where $C$ is a for some constant depending on $N$. Thus it conclude that
    \begin{equation}\label{61}
        w_{0j} \rightarrow w_{0} \quad \text{in} \quad H^{1}(\mathbb{R}^{N}) \quad \text{as} \quad j \rightarrow \infty.
    \end{equation}
    Since $L^{2}(B_{r_{j}})$ is continuous imbedding to $L^{1}(B_{r_{j}})$, $\Vert w_{0j} \Vert_{W^{1,1}(B_{r_{j}})} \rightarrow 0$ as $ j \rightarrow \infty$ and
    \begin{align*}
        \Vert w_{0j}-w_{0} \Vert_{W^{1,1}(\mathbb{R}^{N})}&=\Vert w_{0j}-w_{0} \Vert_{W^{1,1}(B_{r_{j}})}\\
        &\leqslant \Vert w_{0j} \Vert_{W^{1,1}(B_{r_{j}})}+C\Vert w_{0} \Vert_{W^{1,\infty}(\mathbb{R}^{N})}\vert B_{r_{j}}\vert,
    \end{align*}
     where $C$ is a for some constant depending on $N$. Thus we obtain
    \begin{equation*}
        w_{0j} \rightarrow w_{0} \quad \text{in} \quad W^{1,1}(\mathbb{R}^{N}) \quad \text{as} \quad j \rightarrow \infty.
    \end{equation*}
    We note that  L'Hospital's rule shows $ \sup_{\epsilon >0}\frac{\epsilon \log(\epsilon+1)}{\epsilon^{p}} < \infty$ for $p \in \left(1,\frac{2N}{N+2}\right)$ and $\sup_{\epsilon >0}\frac{\log(\epsilon+1)}{\epsilon} <\infty$.
    Since $(u_{0j},v_{0j})$ converge in $ L^{p}(\mathbb{R}^{N})$ for $p \in \left[1,\frac{2N}{N+2}\right)$ ,
    \begin{equation*}
        \int_{\mathbb{R}^{N}}(u_{0j}+1)\log(u_{0j}+1)dx ~\text{and}~ \int_{\mathbb{R}^{N}}(v_{0j}+1)\log(v_{0j}+1)dx < \infty \quad  \forall j \in \mathbb{N}.
    \end{equation*}
    Also by (\ref{61}), we obtain
    \begin{equation*}
        \int_{\mathbb{R}^{N}} \vert \nabla w_{0j} \vert^{2}dx ,\int_{\mathbb{R}^{N}} w_{0j}^{2} dx < \infty \quad \forall j \in \mathbb{N}.
    \end{equation*}
     Therefore,
    \begin{multline}\label{62}
        \sup_{j\in \mathbb{N}} \Bigl\{ \frac{1}{2} \int_{\mathbb{R}^{N}} \vert \nabla w_{0j} \vert^{2} dx + \frac{\lambda}{2}\int_{\mathbb{R}^{N}} w_{0j}^{2} dx
        \\
        +\frac{\alpha}{\chi}\int_{\mathbb{R}^{N}}(u_{0j}+1)\log(u_{0j}+1)dx+\frac{\beta}{\xi}\int_{\mathbb{R}^{N}}(v_{0j}+1)\log(v_{0j}+1)dx \Bigr\} < \infty.
    \end{multline}
    On the other hand, by (\ref{59}), we obtain
    \begin{align*}\label{63}
        \int_{\mathbb{R}^{N}}(\alpha u_{0j}+\beta v_{0j})w_{0j}dx & \geqslant \int_{B_{r_{j}}}(\alpha u_{0j}+\beta v_{0j})w_{0j}dx \\
        &= \vert \partial B_{1} \vert(\alpha a_{j}c_{j}+\beta b_{j}c_{j})\int_{0}^{r_{j}}r^{N-1}(r^{2}+\eta_{j})^{-\frac{N}{2}}dr \\
        & = \vert \partial B_{1} \vert(\alpha a_{j}c_{j}+\beta b_{j}c_{j})\int_{0}^{1}r_{j}^{N}\rho^{N-1}(r_{j}^{2}\rho^{2}+\eta_{j})^{-\frac{N}{2}}d\rho \\
        & =  \vert \partial B_{1} \vert(\alpha a_{j}c_{j}+\beta b_{j}c_{j})\int_{0}^{1}\rho^{N-1}\left(\rho^{2}+\frac{\eta_{j}}{r_{j}^{2}}\right)^{-\frac{N}{2}}d\rho \\
        & = \vert \partial B_{1} \vert(\alpha a_{j}c_{j}+\beta b_{j}c_{j})\varphi(\frac{\eta_{j}}{r_{j}^{2}}) \\
        &=\vert \partial B_{1} \vert (\alpha u_{0}(r_{j})+\beta v_{0}(r_{j}))(r_{j}^{2}+\eta_{j})^{\frac{N}{2}}w_{0j}(r_{j})\varphi(\frac{\eta_{j}}{r_{j}^{2}}) \\
        & \geqslant j\vert \partial B_{1} \vert (\alpha u_{0}(r_{j})+\beta v_{0}(r_{j}))w_{0j}(r_{j}),
    \end{align*}
    which implies that
    \begin{equation}\label{64}
      \int_{\mathbb{R}^{N}}(\alpha u_{0j}+\beta v_{0j})w_{0j}dx \rightarrow \infty \quad \text{as} \quad j \rightarrow \infty.
    \end{equation}
    Combining (\ref{64}) with (\ref{62}), we have
    \begin{equation*}
        \mathcal{F}(u_{0j},v_{0j},w_{0j}) \rightarrow -\infty \quad \text{as} \quad j\rightarrow \infty.
    \end{equation*}
    Put
    \begin{equation*}
        K_{j}:=\Vert u_{0j} \Vert_{1}+\Vert v_{0j} \Vert_{1}+\Vert w_{0j} \Vert_{1} + \Vert \nabla w_{0j} \Vert_{1}+1.
    \end{equation*}
     Since  $(u_{0j},v_{0j}) \rightarrow (u_{0},v_{0})$  in $L^{p}(\mathbb{R}^{N})$ for $p \in \left[1,\frac{2N}{N+2}\right)$ as $j \rightarrow \infty$,
    and $w_{0j} \rightarrow w_{0}$  in $H^{1}(\mathbb{R}^{N}) \cap W^{1,1}(\mathbb{R}^{N})$ as $j \rightarrow \infty$,  we have
    \begin{equation*}
        K_{j} \rightarrow K:=\Vert u_{0} \Vert_{1}+\Vert v_{0} \Vert_{1}+\Vert w_{0} \Vert_{1} + \Vert \nabla w_{0} \Vert_{1}+1, \quad \text{as} \quad j \rightarrow \infty.
    \end{equation*}
    Thus $K_{j}$ is bounded, consequently, there exists positive integer $M$  such that

    \begin{equation*}
        \mathcal{F}(u_{0j},v_{0j},w_{0j}) \leqslant -CK_{j}^{\frac{2}{1-\theta}}  \quad \forall j>M,
    \end{equation*}
     where $C$ is for some positive constant depending on $\lambda,\alpha,\beta,\chi,\xi,N,\theta \in \left(\frac{1}{2},1\right)$ .
    If we replace $j$ with $j-M$, the classical radially symmetry solutions ($u_{j},v_{j},w_{j}$) corresponding to ($u_{0j},v_{0j},w_{0j}$) blows up at the finite time with $T_{max,j}\leqslant 1$ for $\forall j \in \mathbb{N}$.

\begin{flushright}
  $\Box$
\end{flushright}

\end{document}